\newcommand*{\mailto}[1]{\href{mailto:#1}{\nolinkurl{#1}}}
\newcommand{\arxiv}[1]{\href{http://arxiv.org/abs/#1}{arXiv:#1}}
\newcommand{\bbN}{{\mathbb{N}}}
\newcommand{\bbR}{{\mathbb{R}}}
\newcommand{\cH}{{\mathcal H}}
\newcommand{\cN}{{\mathcal N}}
\newcommand{\cW}{{\mathcal W}}
\DeclareMathOperator{\dom}{dom}
\renewcommand{\Im}{\text{\rm Im}}
\newcommand{\lb}{\label}
\newcommand{\bi}{\bibitem}
\newcommand{\Om}{\Omega}
\let\geq\geqslant
\let\leq\leqslant
\def\theequation{\@arabic\c@equation}
\numberwithin{equation}{section}
\newtheorem{theorem}{Theorem}[section]
\newtheorem{lemma}[theorem]{Lemma}
\newtheorem{hypothesis}[theorem]{Hypothesis}
\theoremstyle{remark}
\begin{document}

\numberwithin{equation}{section}
\allowdisplaybreaks

\title[The Krein-von Neumann Realization of $-\Delta+V$ on Lipschitz Domains]{The Krein-von Neumann Realization of Perturbed Laplacians on Bounded Lipschitz Domains}

\author[J.\ Behrndt]{Jussi Behrndt}  
\address{Institut f\"ur Numerische Mathematik, Technische Universit\"at Graz, Steyrergasse 30, 8010 Graz, Austria}  
\email{behrndt@tugraz.at}  
\urladdr{http://www.math.tugraz.at/~behrndt/}
  
\author[F.\ Gesztesy]{Fritz Gesztesy}  
\address{Department of Mathematics,
University of Missouri, Columbia, MO 65211, USA}
\email{gesztesyf@missouri.edu}
\urladdr{http://www.math.missouri.edu/personnel/faculty/gesztesyf.html}

\author[T.\ Micheler]{Till Micheler}
\address{Department of Mathematics, Technische Universit\"{a}t Berlin, 10623 Berlin, Germany} 
\email{micheler@math.tu-berlin.de}

\author[M.\ Mitrea]{Marius Mitrea}
\address{Department of Mathematics,
University of Missouri, Columbia, MO 65211, USA}
\email{mitream@missouri.edu}
\urladdr{http://www.math.missouri.edu/personnel/faculty/mitream.html}

\thanks{J.\,B.\ gratefully acknowledges financial support by the Austrian
Science Fund (FWF), project P 25162-N26.} 
\thanks{Work of M.\,M.\ was partially supported by the Simons Foundation Grant $\#$\,281566
and by a University of Missouri Research Leave grant.} 

\date{\today}
\subjclass[2010]{Primary 35J25, 35J40, 35P15; Secondary 35P05, 46E35, 47A10, 47F05.}
\keywords{Lipschitz domains, Krein Laplacian, trace maps, eigenvalues, spectral analysis, Weyl asymptotics}

\begin{abstract} 
In this paper we study the self-adjoint Krein--von Neumann realization $A_K$ of the perturbed Laplacian 
$-\Delta+V$ in a bounded Lipschitz domain $\Omega\subset\mathbb{R}^n$. We 
provide an explicit and self-contained description of the domain of $A_K$ in terms of Dirichlet and Neumann boundary traces,
and we establish a Weyl asymptotic formula for the eigenvalues of $A_K$.
\end{abstract}

\maketitle

\section{Introduction}  \lb{s1}

The main objective of this note is to investigate the self-adjoint Krein--von Neumann realization associated to the differential expression $-\Delta+V$
in $L^2(\Omega)$, where $\Omega\subset\mathbb{R}^n$, $n>1$, is assumed to be a bounded Lipschitz domain and $V$ is a nonnegative bounded
potential. In particular, we obtain an explicit description of the domain of $A_K$ in terms of Dirichlet and Neumann boundary traces,
and we prove the Weyl asymptotic formula
\begin{equation}\label{introweyl}
N(\lambda,A_K)\underset{\lambda\to\infty}{=}(2\pi)^{-n}v_n|\Omega|\,\lambda^{n/2} 
+O\big(\lambda^{(n-(1/2))/2}\big).
\end{equation}
Here $N(\lambda,A_K)$ denotes the number of nonzero eigenvalues of $A_K$ 
not exceeding $\lambda$, $v_n$ is the volume of the unit ball in $\mathbb{R}^n$, 
and $|\Omega|$ is the ($n$-dimensional) Lebesgue measure of $\Omega$. 

Let us first recall the definition and some properties of the Krein--von Neumann extension in the abstract setting.
Let $S$ be a closed, densely defined, symmetric operator in a Hilbert space $\mathcal H$  and assume that $S$ is strictly positive, that is, for some $c>0$, $(Sf,f)_\cH\geq c\Vert f\Vert^2_\cH$  
for all $f\in\dom (S)$. The Krein--von Neumann extension $S_K$ of $S$ is then given by
\begin{equation}\label{sk}
 S_K f=S^* f,\quad f\in\dom (S_K)=\dom (S)\,\dot+\,\ker (S^*),
\end{equation}
see the original papers Krein \cite{Kr47} and von Neumann \cite{N29}. 
It follows that $S_K$ is a nonnegative self-adjoint extension of $S$ and that for all other nonnegative self-adjoint extensions $S_\Theta$ of $S$ 
the operator inequality $S_K\leq S_\Theta$ holds in the sense of quadratic forms. As 
$\ker (S_K)=\ker (S^*)$, it is clear that $0$ is an eigenvalue of $S_K$ 
(except if $S$ is self-adjoint, in which case $S_K=S^*=S$).
Furthermore, if the self-adjoint Friedrichs extension $S_F$ of $S$ has purely discrete spectrum then the same is true for the spectrum of $S_K$ with the
possible exception of the eigenvalue $0$, which may have infinite multiplicity. For further developments, extensive references, and a 
more detailed discussion of the properties of the Krein--von Neumann extension of a symmetric operator we refer the reader to \cite[Sect.~109]{AG81a}, 
\cite{AS80}, \cite{AN70}--\cite{Ar00}, \cite[Chs.\ 9, 10]{ABT11}, \cite{AHSD01}--\cite{AT09}, 
\cite{AGMT10}, \cite{AGMST10}, \cite{AGMST13}, \cite{BGMM15}, \cite{Bi56}, \cite{DM91}, \cite{DM95}, \cite[Sect.~15]{Fa75}, \cite[Sect.\ 3.3]{FOT11}, \cite{GM11}, \cite{Gr83}, \cite[Sect.~13.2]{Gr09}, \cite{Gr12}, \cite{HMS04}, \cite{Ma92}, \cite{Ne83}, \cite{PS96}, 
\cite[Ch.~13]{Sc12}, \cite{SS03}, \cite{Sk79}, \cite{St96},  \cite{Ts80}, \cite{Ts81}, \cite{Ts92}, 
and the references cited therein. 

In the concrete case considered in this paper, the symmetric operator $S$ above is given by the minimal operator $A_{min}$ associated to the differential
expression $-\Delta+V$ in the Hilbert space $L^2(\Omega)$, that is,
\begin{equation}\label{aminintroqqq}
 A_{min}=-\Delta +V,\quad \dom (A_{min})=\mathring{H}^2(\Omega),
\end{equation}
where $\mathring{H}^2(\Omega)$ denotes the closure of $C_0^\infty(\Omega)$ in the Sobolev 
space $H^2(\Omega)$, and $0 \leq V \in L^{\infty}(\Omega)$. 
It can be shown that $A_{min}$ is the closure
of the symmetric operator $-\Delta +V$ defined on $C_0^\infty(\Omega)$. We point out that here $\Omega$ is a bounded Lipschitz domain and no further
regularity assumptions on $\partial\Omega$ are imposed, thus it is remarkable that the functions in $\dom (A_{min})$ possess $H^2$-regularity. The adjoint
$A_{min}^*$ of $A_{min}$ coincides with the maximal operator
\begin{equation}
 A_{max}=-\Delta +V,\quad \dom (A_{max})
 =\bigl\{f\in L^2(\Omega) \, \big| -\Delta f+Vf\in L^2(\Omega)\bigr\},
\end{equation}
where $\Delta f$ is understood in the sense of distributions. From 
\eqref{sk} and \eqref{aminintroqqq} it is clear that the Krein--von Neumann extension $A_K$ of $A_{min}$ is then given by
\begin{equation}\label{akintro}
 A_K =-\Delta +V,\quad \dom (A_K)=\mathring{H}^2(\Omega)\,\dot+\,\ker (A_{max}).
\end{equation}
In the present situation $A_{min}$ is a symmetric operator with infinite defect indices 
and therefore $\ker(A_{min}^*)=\ker(A_{max})$ 
is infinite-dimensional. In particular, $0$
is an eigenvalue of $A_K$ with infinite multiplicity, and hence belongs to the essential spectrum. It is also important to note that in general the functions 
in $\ker (A_K)$ do not possess any Sobolev regularity, that is, $\ker (A_K)\not\subset H^s(\Omega)$ for every $s>0$.
Moreover, since $\Omega$ is a bounded set,  
the Friedrichs extension of $A_{min}$ (which coincides with the self-adjoint Dirichlet operator associated to $-\Delta+V$) has compact resolvent and 
hence its spectrum is discrete. The abstract considerations above then yield that with the exception of the eigenvalue $0$
the spectrum of $A_K$ consists of a sequence of positive eigenvalues with finite multiplicity which tends to $+\infty$. 

The description of the domain of the Krein--von Neumann extension $A_K$ in \eqref{akintro} is not satisfactory for applications
involving boundary value problems. Instead, a more explicit description of $\dom(A_K)$
via boundary conditions seems to be natural and desirable. In the case of a bounded $C^\infty$-smooth domain $\Omega$, it is known that
\begin{equation}\label{akakintro}
 \dom (A_K)=\bigl\{f\in \dom(A_{max}) \, \big| \, \gamma_N f+M(0)\gamma_D f=0\bigr\}
\end{equation}
holds, where $\gamma_D$ and $\gamma_N$ denote the Dirichlet and Neumann trace operator, respectively, defined on the maximal domain $\dom(A_{max})$,
and $M(0)$ is the Dirichlet-to-Neumann map or Weyl--Titchmarsh operator for $-\Delta +V$. The description \eqref{akakintro} goes back to Grubb \cite{Gr68}, where certain classes of elliptic 
differential operators with smooth coefficients are discussed in great detail. Note that 
in contrast to the Dirichlet and Neumann boundary conditions the boundary condition
in \eqref{akakintro} is nonlocal, as it involves $M(0)$ which, when $\Om$ is smooth, is a boundary
pseudodifferential operator of order $1$. 
It is essential for the boundary condition \eqref{akakintro} that both trace operators 
$\gamma_D$ and $\gamma_N$ are defined on $\dom (A_{max})$. Even in the case of a smooth boundary $\partial\Omega$, the elements in $\dom(A_K)$,
in general, do not possess any $H^s$-regularity for $s>0$, and hence special attention has to 
be paid to the definition and the properties
of the trace operators. In the smooth setting the classical analysis due to Lions and Magenes \cite{LM} ensures that 
$\gamma_D:\dom (A_{max})\rightarrow H^{-1/2}(\partial\Omega)$ and 
$\gamma_N:\dom (A_{max})\rightarrow H^{-3/2}(\partial\Omega)$ are well-defined
continuous mappings when $\dom (A_{max})$ is equipped with the graph norm. 

Let us now turn again to the present situation, where $\Omega$ is assumed to be a bounded Lipschitz domain. Our first main objective is to 
extend the description of $\dom (A_K)$ in \eqref{akakintro} to the nonsmooth setting. The main difficulty here is to define appropriate trace operators
on the domain of the maximal operator. We briefly sketch the strategy from \cite{BM14}, which is mainly based and inspired by abstract extension theory
of symmetric operators. For this denote by $A_D$ and $A_N$ the self-adjoint realizations of $-\Delta+V$ corresponding to Dirichlet and Neumann boundary conditions, respectively. Recall that by \cite{JK95} and \cite{FMM98}
their domains $\dom(A_D)$ and $\dom (A_N)$ are both contained in $H^{3/2}(\Omega)$. Now
consider the boundary spaces 
\begin{equation}
 \mathscr G_D(\partial\Omega):=\bigl\{\gamma_D f \, \big| \, f\in\dom (A_N)\bigr\},\quad
 \mathscr G_N(\partial\Omega):=\bigl\{\gamma_N f \, \big| \, f\in\dom (A_D)\bigr\},
 \end{equation}
equipped with suitable inner products induced by the Neumann-to-Dirichlet map and Dirichlet-to-Neumann map for $-\Delta +V-i$, see Section~\ref{sec3}
for the details. It turns out that  $\mathscr G_D(\partial\Omega)$ and 
$\mathscr G_N(\partial\Omega)$ are both Hilbert spaces which are densely embedded in $L^2(\partial\Omega)$. It was shown in \cite{BM14} that 
the Dirichlet trace operator $\gamma_D$ and Neumann trace operator $\gamma_N$  can be extended
 by continuity to surjective mappings 
 \begin{equation}
  \widetilde\gamma_D:\dom (A_{max})\rightarrow \mathscr G_N(\partial\Omega)^*\quad\text{and}\quad
  \widetilde\gamma_N:\dom (A_{max})\rightarrow \mathscr G_D(\partial\Omega)^*,
 \end{equation}
 where $\mathscr G_D(\partial\Omega)^*$ and $\mathscr G_N(\partial\Omega)^*$ denote the adjoint (i.e., conjugate dual) spaces of $\mathscr G_D(\partial\Omega)$ and 
$\mathscr G_N(\partial\Omega)$, respectively. Within the same process also the Dirichlet-to-Neumann map $M(0)$ of $-\Delta+V$ (originally defined as a mapping from
$H^1(\partial\Omega)$ to $L^2(\partial\Omega)$) admits an extension to a mapping $\widetilde M(0)$ from  $\mathscr G_N(\partial\Omega)^*$ to
$\mathscr G_D(\partial\Omega)^*$. With the help of the trace maps $\widetilde\gamma_D$ and $\widetilde\gamma_N$, and the extended Dirichlet-to-Neumann 
operator $\widetilde M(0)$ we are then able to extend the description of the domain of the Krein--von Neumann extension for smooth domains in \eqref{akakintro} to the
case of Lipschitz domains. More precisely, we show in Theorem~\ref{akthm} that the Krein--von Neumann extension $A_K$ of $A_{min}$ is defined on 
 \begin{equation}\label{akakintro2}
 \dom (A_K)=\bigl\{f\in \dom(A_{max}) \, \big| \, \widetilde\gamma_N f+\widetilde M(0)\widetilde \gamma_D f=0\bigr\}.
\end{equation}
For an exhaustive treatment of boundary trace operators on bounded Lipschitz domains in 
$\bbR^n$ and applications to Schr\"odinger operators we refer to \cite{BGMM15}.

Our second main objective in this paper is to prove the Weyl asymptotic formula \eqref{introweyl} for the nonzero eigenvalues of $A_K$. We mention that the study of
the asymptotic behavior of the spectral distribution function of the Dirichlet Laplacian originates in the work by Weyl (cf.\ \cite{We12a}, \cite{We12}, and the 
references in \cite{We50}), and that generalizations of the classical Weyl asymptotic formula were obtained in numerous papers - we refer the reader to
\cite{BS70}, \cite{BS71}, \cite{BS72}, \cite{BS73}, \cite{BS79}, \cite{BS80}, \cite{Da97}, \cite{NS05}, \cite{SV97}, and the introduction in \cite{AGMST13}
for more details. There are relatively few papers available that treat the spectral asymptotics of the Krein Laplacian or
the perturbed Krein Laplacian $A_K$. Essentially these considerations are inspired by Alonso 
and Simon who, at the end of their paper \cite{AS80} posed the question if the asymptotics of the nonzero eigenvalues of the Krein Laplacian is given by Weyl's formula?
In the case where $\Omega$ is bounded and $C^{\infty}$-smooth, and 
$V\in C^{\infty}(\overline{\Omega})$, this has been shown to be the case three years 
later by Grubb \cite{Gr83}, see also the more recent contributions \cite{Mi94}, \cite{Mi06}, and \cite{Gr12}.
Following the ideas in \cite{Gr83} it was shown in \cite{AGMT10} that for so-called quasi-convex domains (a nonsmooth subclass of bounded Lipschitz domains 
with the key feature that $\dom (A_D)$ and $\dom (A_N)$ are both contained in $H^2(\Omega)$)
the Krein--von Neumann extension $A_K$ is spectrally equivalent to the buckling of a clamped plate problem, which in turn can be reformulated
with the help of the quadratic forms
\begin{equation}\label{aformintro}
 \mathfrak a[f,g]:=\bigl(A_{min} f,A_{min} g\bigr)_{L^2(\Omega)} \, \text{ and } \, 
 \mathfrak t[f,g]:=\bigl( f,A_{min} g\bigr)_{L^2(\Omega)},
 \end{equation}
defined on $\dom (A_{min})=\mathring{H}^2(\Omega)$.
In the Hilbert space $(\mathring{H}^2(\Omega),\mathfrak a[\cdot,\cdot])$ the form $\mathfrak t$ can be expressed with the help of a 
nonnegative compact operator $T$, and it follows that 
\begin{equation}\label{schoenesache}
 \lambda\in\sigma_p(A_{K})\backslash\{0\}\,\text{ if and only if }\, 
\lambda^{-1}\in\sigma_p(T), 
\end{equation}
counting multiplicities. These considerations can be extended from quasi-convex domains to the more general setting of Lipschitz domains, see, for instance, 
Section~\ref{sec4} and Lemma~\ref{mainlem}. Finally, the main ingredient in the proof of the Weyl asymptotic formula
\eqref{introweyl} for the Krein--von Neumann extension $A_K$ of $-\Delta+V$ on a bounded Lipschitz domain $\Omega$ is then 
a more general Weyl type asymptotic formula due to 
Kozlov \cite{Ko83} (see also \cite{Ko79}, \cite{Ko84}) which yields the asymptotics of the spectral distribution of function of the 
compact operator $T$, and hence via \eqref{schoenesache} the asymptotics of the spectral distribution function of $A_K$.
This reasoning in the proof of our second main result Theorem~\ref{mainthm} is along the lines  of 
\cite{AGMT10,AGMST10}, where the special case of quasi-convex domains was treated. 
For perturbed Krein Laplacians this result completes work that started with Grubb more than 30 years ago and demonstrates 
that the question posed by Alonso and Simon in \cite{AS80} regarding the validity of the 
Weyl asymptotic formula continues to have an affirmative answer for  bounded Lipschitz domains -- the natural end of the line in the 
development from smooth domains all the way to minimally smooth ones.

\section{Schr\"{o}dinger Operators on Bounded Lipschitz Domains}  
\lb{s3}

This section is devoted to a study of self-adjoint Schr\"{o}dinger operators on a nonempty,  
bounded Lipschitz domain in $\bbR^n$ (automatically assumed to be open). We shall make 
the following general assumption.

\begin{hypothesis}\lb{h2.1} 
Let $n\in\bbN\backslash\{1\}$, assume that $\Omega\subset\mathbb R^n$ is a bounded Lipschitz domain, 
and suppose that $0 \leq V\in L^\infty(\Omega)$. 
\end{hypothesis}

We consider operator realizations of the differential expression $-\Delta+V$ in 
the Hilbert space $L^2(\Omega)$. For this we define the \textit{preminimal} realization $A_p$ 
of $-\Delta+V$ by 
\begin{equation}\lb{We-Q.1}
A_{p}:=-\Delta+V,\quad\dom(A_{p}):= C^\infty_0(\Omega).
\end{equation} 
It is clear that $A_{p}$ is a densely defined, symmetric operator in $L^2(\Omega)$, and hence 
closable. The \textit{minimal} realization $A_{min}$ of $-\Delta+V$ is defined 
as the closure of $A_{p}$ in $L^2(\Omega)$, 
\begin{equation}\lb{eqn:Amin1}
A_{min}:=\overline{A_p}.
\end{equation} 
It follows that $A_{min}$ is a densely defined, closed, symmetric operator in $L^2(\Omega)$. 
The \textit{maximal} realization $A_{max}$ of $-\Delta+V$ is given by 
\begin{equation}\lb{We-Q.2}
A_{max}:=-\Delta+V,\quad
\dom(A_{max}):=\bigl\{f\in L^2(\Omega)\, \big| -\Delta f+Vf\in L^2(\Omega)\bigr\},    
\end{equation}
where the expression $\Delta f$, $f\in L^2(\Omega)$, is understood in the sense of distributions. 

In the next lemma we collect some properties of the operators $A_{p}$, $A_{min}$, 
and $A_{max}$.
The standard $L^2$-based Sobolev spaces of order $s\geq 0$ 
will be denoted by $H^s(\Omega)$; for the closure of $C_0^\infty(\Omega)$ in $H^s(\Omega)$ we write $\mathring{H}^s(\Omega)$.

\begin{lemma}\lb{lemmaa}
Assume Hypothesis~\ref{h2.1} and let $A_{p}$, $A_{min}$, and $A_{max}$ 
be as introduced above. Then the following assertions hold:
\begin{itemize}
 \item [$(i)$] $A_{min}$ and $A_{max}$ are adjoints of each other, that is, 
 \begin{equation}\lb{eqn:AminAmax}
A_{min}^*=A_{p}^*=A_{max}\,\text{ and }\, 
A_{min}=\overline{A_{p}}=A_{max}^*.
\end{equation}
\item [$(ii)$] $A_{min}$ is defined on $\mathring{H}^2(\Omega)$, that is,
\begin{equation}\label{amindom}
 \dom(A_{min})=\mathring{H}^2(\Omega),
\end{equation}
and the graph norm of $A_{min}$ and the $H^2$-norm are equivalent on $\dom (A_{min})$.
\item [$(iii)$] $A_{min}$ is strictly positive, that  is, for some $C>0$ we have
\begin{equation}\label{aminsemi}
 (A_{min} f,f)_{L^2(\Omega)} \geq C \Vert f\Vert^2_{L^2(\Omega)},\quad f\in\mathring{H}^2(\Omega).
\end{equation}
\item [$(iv)$] $A_{min}$ has infinite deficiency indices.
\end{itemize}
\end{lemma}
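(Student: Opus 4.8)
The plan is to establish the four items in the order stated, since each builds on the previous one. \emph{For part (i),} I would compute $A_p^\ast$ directly from the definition of the adjoint. A function $g\in L^2(\Omega)$ belongs to $\dom(A_p^\ast)$ exactly when $f\mapsto (A_p f,g)_{L^2(\Omega)}$ extends to an $L^2(\Omega)$-bounded functional on $C_0^\infty(\Omega)$; since $V$ is real-valued and bounded, integrating by parts against test functions identifies this functional, in the sense of distributions, with pairing against $(-\Delta+V)g$. Hence $g\in\dom(A_p^\ast)$ if and only if $(-\Delta+V)g\in L^2(\Omega)$, i.e. $g\in\dom(A_{max})$, and then $A_p^\ast g=A_{max}g$; thus $A_p^\ast=A_{max}$. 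Because passing to the closure does not change the adjoint, $A_{min}^\ast=(\overline{A_p})^\ast=A_p^\ast=A_{max}$, and taking adjoints once more together with the fact that $A_{min}$ is closed yields $A_{max}^\ast=A_{min}^{\ast\ast}=A_{min}$. This step is routine functional-analytic bookkeeping.

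\emph{For part (ii),} the crux is the integration-by-parts identity
\[
\norm{\Delta f}_{L^2(\Omega)}^2=\sum_{j,k=1}^n\norm{\partial_j\partial_k f}_{L^2(\Omega)}^2,\qquad f\in C_0^\infty(\Omega),
\]
in which all boundary terms vanish because $f$ has compact support, so no regularity of $\partial\Omega$ is needed. Together with $\norm{\nabla f}^2_{L^2(\Omega)}=-(\Delta f,f)_{L^2(\Omega)}\le\tfrac12\big(\norm{f}^2_{L^2(\Omega)}+\norm{\Delta f}^2_{L^2(\Omega)}\big)$ and $\norm{Vf}_{L^2(\Omega)}\le\norm{V}_{L^\infty(\Omega)}\norm{f}_{L^2(\Omega)}$, this shows that the graph norm $f\mapsto\big(\norm{f}_{L^2(\Omega)}^2+\norm{A_p f}_{L^2(\Omega)}^2\big)^{1/2}$ is equivalent to the full $H^2(\Omega)$ norm on $C_0^\infty(\Omega)$. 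Since $A_{min}=\overline{A_p}$ is the closure of $A_p$ in the graph norm while $\mathring H^2(\Omega)$ is by definition the closure of $C_0^\infty(\Omega)$ in the $H^2(\Omega)$ norm, the two closures coincide; hence $\dom(A_{min})=\mathring H^2(\Omega)$ with equivalent norms, and $A_{min}f=-\Delta f+Vf$ there.

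\emph{For part (iii),} let $f\in C_0^\infty(\Omega)$; integration by parts gives $(A_{min}f,f)_{L^2(\Omega)}=\norm{\nabla f}_{L^2(\Omega)}^2+\int_\Omega V\abs{f}^2\,dx\ge\norm{\nabla f}_{L^2(\Omega)}^2$, using $V\ge0$. Since $\Omega$ is bounded, the Poincar\'e inequality supplies $c>0$ with $\norm{f}_{L^2(\Omega)}\le c\,\norm{\nabla f}_{L^2(\Omega)}$ for all $f\in\mathring H^1(\Omega)$, whence $(A_{min}f,f)_{L^2(\Omega)}\ge c^{-2}\norm{f}_{L^2(\Omega)}^2$. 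Both sides are continuous for the $H^2(\Omega)$ norm, so by part (ii) the inequality extends from $C_0^\infty(\Omega)$ to all of $\mathring H^2(\Omega)=\dom(A_{min})$, giving \eqref{aminsemi} with $C=c^{-2}$.

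\emph{For part (iv), the \textbf{main obstacle}:} From (iii), $A_{min}-\lambda$ is bounded below, hence injective with closed range, for every $\lambda\in\bbC\setminus[C,\infty)$, so every such $\lambda$ is a point of regular type; as this set is connected and $\dim\ker(A_{max}-\bar\lambda)=\dim\ran(A_{min}-\lambda)^\perp$ is locally constant there, the deficiency index of $A_{min}$ equals $\dim\ker(A_{max})$. It therefore suffices to show that $\ker(A_{max})$ is infinite-dimensional, and this is the one part that is not a mere computation: it requires producing infinitely many genuine $L^2(\Omega)$ solutions of $(-\Delta+V)u=0$. I would do so by a Poisson/Lax--Milgram construction. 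Choose infinitely many linearly independent boundary data $g\in H^{1/2}(\partial\Omega)$; by surjectivity of the Dirichlet trace on Lipschitz domains pick $G\in H^1(\Omega)$ with $\gamma_D G=g$, and solve the weak problem $\int_\Omega(\nabla v\cdot\nabla\phi+Vv\phi)\,dx=-\int_\Omega(\nabla G\cdot\nabla\phi+VG\phi)\,dx$ for $v\in\mathring H^1(\Omega)$, which is uniquely solvable by Lax--Milgram since the left-hand form is coercive on $\mathring H^1(\Omega)$ (using $V\ge0$ and Poincar\'e). Then $u=G+v\in H^1(\Omega)\subset L^2(\Omega)$ satisfies $(-\Delta+V)u=0$ in the distributional sense, so $u\in\ker(A_{max})$ with $\gamma_D u=g$; distinct $g$'s give linearly independent $u$'s, so $\ker(A_{max})$ is infinite-dimensional. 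Equivalently, running the same construction with $V-i$ in place of $V$, whose form is still coercive, directly produces infinitely many elements of $\ker(A_{max}-i)$ and avoids the regular-point argument altogether. The genuine difficulty is concentrated here: solving the boundary value problem and controlling the trace on a merely Lipschitz boundary, rather than the elementary estimates underlying (i)--(iii).
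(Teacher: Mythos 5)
Your proposal is correct, and all four arguments are sound: the distributional computation of $A_p^*$ in (i); the integration-by-parts identity $\norm{\Delta f}_{L^2(\Omega)}^2=\sum_{j,k}\norm{\partial_j\partial_k f}_{L^2(\Omega)}^2$ on $C_0^\infty(\Omega)$, which correctly exploits compact support to avoid any boundary regularity and yields equivalence of the graph norm and the $H^2$-norm, hence $\dom(A_{min})=\mathring H^2(\Omega)$ in (ii); Poincar\'e plus density in (iii); and, in (iv), constancy of the deficiency index on the connected set of points of regular type $\bbC\setminus[C,\infty)$ combined with a Lax--Milgram construction of infinitely many elements of $\ker(A_{max})$ with prescribed linearly independent Dirichlet traces. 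There is, however, nothing in the paper to compare this against line by line: the authors state Lemma~\ref{lemmaa} without proof, as a collection of known facts, deferring to the companion works \cite{BM14} and \cite{BGMM15} (and remarking in the introduction only that the $H^2$-regularity of $\dom(A_{min})$ is ``remarkable'' for a merely Lipschitz $\Omega$). Your write-up in fact demystifies that remark: for the \emph{minimal} operator the $H^2$-regularity is elementary precisely because all integrations by parts happen away from $\partial\Omega$, in contrast to the genuinely deep $H^{3/2}$-regularity of $\dom(A_D)$ and $\dom(A_N)$ quoted later from \cite{JK95}, \cite{FMM98}. Two small points worth making explicit if this were to be inserted as a proof: in (iv) the map $g\mapsto u$ is linear and injective because the solution of the weak Dirichlet problem is unique (coercivity) and $\gamma_D u=g$, which is what converts linearly independent boundary data into linearly independent kernel elements; and the regular-type argument can indeed be bypassed entirely, as you note, by running the same construction for $-\Delta+V-i$, whose sesquilinear form is coercive in the sense $\abs{a(v,v)}\geq c\norm{v}_{H^1(\Omega)}^2$, producing deficiency elements directly.
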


One recalls that the {\it Friedrichs extension} $A_F$ of $A_{min}$ is defined by
\begin{equation}\label{afaf}
A_F:=-\Delta+V,\quad \dom(A_F):=\bigl\{f\in\mathring H^1(\Omega) \, \big| \, 
\Delta f\in L^2(\Omega)\bigr\}.
\end{equation}
It is well-known that $A_F$ is a strictly positive self-adjoint operator in $L^2(\Omega)$ with 
compact resolvent (see, e.g. \cite[Sect.~VI.1]{EE89}). 

In this note we are particularly interested in the {\em Krein--von Neumann extension} 
$A_K$ of $A_{min}$. According to \eqref{sk}, $A_K$ is given by
\begin{equation}\label{akjussi}
A_K:=-\Delta+V,\quad \dom(A_K):=\dom(A_{min})\,
\dotplus\,\ker(A_{max}). 
\end{equation}

In the following theorem we briefly collect some well-known properties of the Krein--von Neumann extension $A_{K}$ in the present setting. For more details we refer the reader to the celebrated paper \cite{Kr47} by Krein
and to \cite{AS80}, \cite{AN70}, \cite{AT03}, \cite{AGMT10}, \cite{AGMST10}, \cite{AGMST13}, \cite{Gr12}, and \cite{HMS04} for further developments and references.

\begin{theorem}\lb{t6.3}
Assume Hypothesis~\ref{h2.1} and let $A_{K}$ be the Krein--von Neumann extension of $A_{min}$. Then the following assertions hold: 
\begin{itemize}
 \item [$(i)$] $A_K$ is a nonnegative self-adjoint operator in $L^2(\Omega)$ and $\sigma(A_K)$ 
consists of eigenvalues only. The eigenvalue $0$ has infinite multiplicity, 
$$\dim(\ker(A_K))=\infty,$$ and the restriction $A_{K}|_{(\ker(A_{K}))^\bot}$ 
is a strictly positive self-adjoint operator in the Hilbert space $(\ker(A_{K}))^\bot$ 
with compact resolvent. 
\item [$(ii)$] $\dom (A_K)\not\subset H^s(\Omega)$ for every $s>0$.
\item [$(iii)$] A nonnegative self-adjoint operator $B$ in $L^2(\Omega)$ is a 
self-adjoint extension of $A_{min}$ if and only if  for some $($and, hence for all\,$)$ $\mu<0$, 
\begin{equation}\label{afakjussi}
 (A_{F} - \mu)^{-1}\leq (B-\mu)^{-1} \leq (A_{K}-\mu)^{-1}. 
\end{equation}
\end{itemize}
\end{theorem}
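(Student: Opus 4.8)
The plan is to derive all three assertions from the abstract Krein--von Neumann machinery of \eqref{sk} together with the concrete facts about $A_{min}$, $A_{max}$, and $A_F$ recorded in Lemma~\ref{lemmaa}. For (i), self-adjointness of $A_K$ is precisely the content of the von Neumann--Krein construction \eqref{akjussi} applied to the strictly positive operator $A_{min}$ (Lemma~\ref{lemmaa}(iii)), so only the spectral claims remain. Writing a generic $u\in\dom(A_K)$ as $u=u_{min}+u_0$ with $u_{min}\in\dom(A_{min})$ and $u_0\in\ker(A_{max})$, the identity $A_K u=A_{max}u=A_{min}u_{min}$ together with $(A_{min}u_{min},u_0)_{L^2(\Omega)}=(u_{min},A_{max}u_0)_{L^2(\Omega)}=0$ yields $(A_K u,u)_{L^2(\Omega)}=(A_{min}u_{min},u_{min})_{L^2(\Omega)}\ge 0$, and simultaneously $\ran(A_K)=\ran(A_{min})$ and $\ker(A_K)=\ker(A_{max})$. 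That $\dim(\ker(A_K))=\infty$ I would obtain by combining Lemma~\ref{lemmaa}(iv) with Lemma~\ref{lemmaa}(iii): strict positivity places $0$ strictly below $\sigma(A_F)$, so $0$ is a point of regular type for $A_{min}$ and $\dim\ker(A_{max})=\dim\ker(A_{min}^*)$ equals the infinite deficiency index.

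The substantive part of (i) is the compact resolvent of the reduced operator $\widehat A_K:=A_K|_{(\ker(A_K))^\bot}$. First I would show $\widehat A_K$ is strictly positive with the same bound $C$ as in \eqref{aminsemi}: for $u=u_{min}+u_0\in\dom(A_K)\cap(\ker(A_K))^\bot$ one has $u\perp u_0$, whence $\|u\|_{L^2(\Omega)}^2=\|u_{min}\|_{L^2(\Omega)}^2-\|u_0\|_{L^2(\Omega)}^2\le\|u_{min}\|_{L^2(\Omega)}^2$, and therefore $(A_K u,u)_{L^2(\Omega)}=(A_{min}u_{min},u_{min})_{L^2(\Omega)}\ge C\|u_{min}\|_{L^2(\Omega)}^2\ge C\|u\|_{L^2(\Omega)}^2$. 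Consequently $\widehat A_K$ is boundedly invertible on $(\ker(A_K))^\bot$. Letting $P$ denote the orthogonal projection onto $(\ker(A_K))^\bot=(\ker(A_{max}))^\bot=\overline{\ran(A_{min})}$, for $g=A_{min}f_{min}\in\ran(A_{min})$ one solves $A_K u=g$ in $(\ker(A_K))^\bot$ explicitly by $u=Pf_{min}=PA_F^{-1}g$, using $A_F^{-1}g=f_{min}$ since $A_F\supset A_{min}$. Hence $\widehat A_K^{-1}$ and the operator $PA_F^{-1}$ agree on the dense subspace $\ran(A_{min})$; being both bounded they coincide, so $\widehat A_K^{-1}$ is compact because $A_F$ has compact resolvent. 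Thus $\sigma(\widehat A_K)$ is a sequence of positive eigenvalues of finite multiplicity tending to $+\infty$, and $\sigma(A_K)=\{0\}\cup\sigma(\widehat A_K)$ consists of eigenvalues only.

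For (ii) I would argue by contradiction via compactness. If $\dom(A_K)\subset H^s(\Omega)$ for some $s>0$, then the inclusion of $\dom(A_K)$, equipped with the graph norm, into $H^s(\Omega)$ is closed, hence continuous by the closed graph theorem, giving $\|f\|_{H^s(\Omega)}\le \widetilde C\bigl(\|f\|_{L^2(\Omega)}+\|A_K f\|_{L^2(\Omega)}\bigr)$. Restricting to the closed infinite-dimensional subspace $\ker(A_K)=\ker(A_{max})$, on which $A_K f=0$, yields $\|f\|_{H^s(\Omega)}\le \widetilde C\|f\|_{L^2(\Omega)}$. Then the $L^2(\Omega)$-unit ball of $\ker(A_K)$ is bounded in $H^s(\Omega)$, hence relatively compact in $L^2(\Omega)$ by the Rellich--Kondrachov theorem; but the unit ball of an infinite-dimensional space is never compact, a contradiction.

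Finally, (iii) is the Krein--Birman--Vishik--Ando--Nishio ordering. For the forward direction I would invoke the extremal property of the Friedrichs and Krein extensions, namely $A_K\le B\le A_F$ in the sense of forms for every nonnegative self-adjoint extension $B$ of $A_{min}$, and pass to resolvents through the operator monotonicity of $t\mapsto-(t-\mu)^{-1}$, whose inverse is again operator monotone; this makes the form inequality equivalent to $(A_F-\mu)^{-1}\le(B-\mu)^{-1}\le(A_K-\mu)^{-1}$ already at a single $\mu<0$, and hence at all. The converse requires the most care, and I would prove it directly: given the resolvent sandwich at one $\mu<0$ and any $f\in\dom(A_{min})$, set $g=(A_{min}-\mu)f$, so that $(A_F-\mu)^{-1}g=(A_K-\mu)^{-1}g=f$ since $A_F$ and $A_K$ both extend $A_{min}$. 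Writing $R_F,R_B,R_K$ for $(A_F-\mu)^{-1},(B-\mu)^{-1},(A_K-\mu)^{-1}$, the nonnegativity $R_K-R_B\ge 0$ and $R_B-R_F\ge 0$ together with $((R_K-R_F)g,g)_{L^2(\Omega)}=(f,g)_{L^2(\Omega)}-(f,g)_{L^2(\Omega)}=0$ forces $((R_B-R_F)g,g)_{L^2(\Omega)}=0$, and Cauchy--Schwarz for the nonnegative form then gives $(R_B-R_F)g=0$, i.e. $(B-\mu)^{-1}g=f$. Hence $f\in\dom(B)$ and $Bf=A_{min}f$, proving $A_{min}\subset B$. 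I expect this converse, which extracts an operator inclusion from a purely order-theoretic hypothesis, to be the main obstacle, with the compact-resolvent identification $\widehat A_K^{-1}=PA_F^{-1}$ in (i) the second most delicate step.
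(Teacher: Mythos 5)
The paper does not actually prove Theorem~\ref{t6.3}: it presents the three assertions as well-known facts and delegates the proofs to the literature (Krein \cite{Kr47}, Alonso--Simon \cite{AS80}, \cite{AN70}, \cite{AGMT10}, \cite{AGMST13}, etc.). Your write-up is therefore, by necessity, a different route -- a self-contained one -- and after checking each step I find it correct. In $(i)$, the decomposition $u=u_{min}+u_0$ with $(A_{min}u_{min},u_0)_{L^2(\Omega)}=(u_{min},A_{max}u_0)_{L^2(\Omega)}=0$ gives nonnegativity and $\ker(A_K)=\ker(A_{max})$ exactly as you say; the infinite kernel dimension via constancy of defect numbers on a connected set of points of regular type is legitimate, since $A_{min}\geq C>0$ makes $\bbC\setminus[C,\infty)$ such a set containing both $0$ and $\pm i$. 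Your key identity $\widehat A_K^{-1}=PA_F^{-1}\big|_{(\ker(A_K))^\bot}$ is valid: $Pf_{min}\in\dom(A_K)$ because $(I-P)f_{min}\in\ker(A_{max})\subset\dom(A_K)$, and $A_KPf_{min}=A_{min}f_{min}$; note moreover that $\ran(A_{min})$ is closed ($A_{min}$ is closed and bounded below), so your ``dense subspace'' is in fact all of $(\ker(A_K))^\bot=\ran(A_{min})$, which only strengthens the argument. This compression identity is precisely the mechanism by which the cited references (going back to Krein) transfer compactness of $A_F^{-1}$ to the reduced Krein extension. Part $(ii)$ (closed graph theorem, Rellich--Kondrachov on the bounded Lipschitz domain, Riesz's lemma on the infinite-dimensional $\ker(A_K)$) is the standard argument and matches what is done in \cite{AGMT10}. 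In $(iii)$, the forward direction imports Krein's extremality $A_K\leq B\leq A_F$ in the form sense -- which the paper itself quotes in its introduction, so this is a fair black box -- together with the standard equivalence between form ordering and resolvent ordering at some/all $\mu<0$; your converse, extracting $(B-\mu)^{-1}g=f$ from $((R_B-R_F)g,g)_{L^2(\Omega)}=0$ via the Cauchy--Schwarz inequality for the nonnegative operator $R_B-R_F$, is correct and is essentially the classical Krein/Ando--Nishio argument. In short: the paper buys brevity by citation; your proof buys self-containedness at the cost of importing two abstract ingredients (self-adjointness of the Krein--von Neumann extension and its form-extremality), and every step you supply on top of those is sound.
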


We also mention that the Friedrich extension $A_F$ and the Krein--von Neumann 
extension $A_K$ are relatively prime (or disjoint), that is, 
\begin{equation}\label{disjoint}
\dom(A_F)\cap\dom(A_K)=\dom(A_{min})=\mathring{H}^2(\Omega).
\end{equation}

For later purposes we briefly recall some properties of the Dirichlet and Neumann trace operator and 
the corresponding self-adjoint Dirichlet and Neumann realizations of $-\Delta+V$
in $L^2(\Omega)$. We consider the space
\begin{equation}
H^{3/2}_\Delta(\Omega):=\bigl\{f\in H^{3/2}(\Omega) \, \big| \,\Delta f\in L^2(\Omega)\bigr\}, 
\end{equation}
equipped with the inner product 
\begin{equation}
 (f,g)_{H^{3/2}_\Delta(\Omega)}=(f,g)_{H^{3/2}(\Omega)}+(\Delta f,\Delta g)_{L^2(\Omega)},
 \quad f,g\in H^{3/2}_\Delta(\Omega).
\end{equation}
One recalls that the Dirichlet and Neumann trace operators $\gamma_D$ and $\gamma_N$ 
defined by
\begin{equation}
 \gamma_D f:=f\upharpoonright_{\partial\Omega}\, \text{ and } \, 
 \gamma_Nf:=\mathfrak n\cdot\nabla f\upharpoonright_{\partial\Omega}, 
 \quad f\in C^\infty(\overline\Omega), 
\end{equation}
admit continuous extensions to operators
\begin{equation}\label{extensions}
 \gamma_D:H^{3/2}_\Delta(\Omega)\rightarrow H^1(\partial\Omega)\, \text{ and } \, \gamma_N:H^{3/2}_\Delta(\Omega)\rightarrow L^2(\partial\Omega).
\end{equation}
Here $H^1(\partial\Omega)$ denotes the usual $L^2$-based Sobolev space of order $1$ on $\partial\Omega$; 
cf.\ \cite[Chapter 3]{Mc00} and \cite{MM13}.
It is important to note that the extensions in \eqref{extensions} are both surjective, 
see \cite[Lemma 3.1 and Lemma 3.2]{GM11}.

In the next theorem we collect some properties of the Dirichlet realization $A_D$ and Neumann realization $A_N$  of $-\Delta +V$ in $L^2(\Omega)$. We recall that
the operators $A_D$ and $A_N$ are defined as the unique self-adjoint operators corresponding to the closed nonnegative forms
\begin{equation}
\begin{split}
 \mathfrak a_D[f,g]&:=(\nabla f,\nabla g)_{(L^2(\Omega))^n}+(Vf,g)_{L^2(\Omega)},
 \quad \dom (\mathfrak a_D) := \mathring H^1(\Omega),\\
 \mathfrak a_N[f,g]&:=(\nabla f,\nabla g)_{(L^2(\Omega))^n}+(Vf,g)_{L^2(\Omega)}, 
 \quad \dom (\mathfrak a_N) := H^1(\Omega).
 \end{split}
 \end{equation}
 In particular, one has $A_F=A_D$ by \eqref{afaf}. In the next theorem we collect some well-known facts about the self-adjoint operators $A_D$ and $A_N$.
 The $H^{3/2}$-regularity of the functions in their domains is remarkable, and a consequence of $\Omega$ being a bounded Lipschitz domain. We refer the reader
 to \cite[Lemma 3.4 and Lemma 4.8]{GM08} for more details, see also \cite{JK81,JK95} and \cite{FMM98}.  

\begin{theorem}\lb{tdn}
Assume Hypothesis~\ref{h2.1} and let $A_D$ and $A_N$ be the self-adjoint Dirichlet and Neumann realization of $-\Delta +V$ in $L^2(\Omega)$, respectively.
Then the following assertions hold:
\begin{itemize}
 \item [$(i)$] The operator $A_D$ coincides with the Friedrichs extension $A_F$ and is given by
 \begin{equation}
   A_D=-\Delta+V,\quad \dom (A_D)=\bigl\{f\in H^{3/2}_\Delta(\Omega) \, \big| \, 
   \gamma_D f=0\bigr\}.
 \end{equation}
 The resolvent of $A_D$ is compact, and the spectrum of $A_D$ is purely discrete and 
 contained in $(0,\infty)$.
\item [$(ii)$] The operator $A_N$ is given by 
\begin{equation}
   A_N=-\Delta+V,\quad \dom (A_N)=\bigl\{f\in H^{3/2}_\Delta(\Omega) \, \big| \,\gamma_N f=0\bigr\}.
 \end{equation}
The resolvent of $A_N$ is compact, and the spectrum of $A_N$ is purely discrete and contained in $[0,\infty)$.
\end{itemize}
\end{theorem}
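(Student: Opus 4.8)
The plan is to construct $A_D$ and $A_N$ from their defining forms via the first representation theorem and then read off the domains, the resolvent compactness, and the location of the spectrum. First I would note that, since $0\le V\in L^\infty(\Omega)$, one has $0\le(Vf,f)_{L^2(\Omega)}\le\Vert V\Vert_{L^\infty(\Omega)}\Vert f\Vert_{L^2(\Omega)}^2$, so the form norms $\mathfrak{a}_D[f,f]+\Vert f\Vert_{L^2(\Omega)}^2$ and $\mathfrak{a}_N[f,f]+\Vert f\Vert_{L^2(\Omega)}^2$ are equivalent to the $H^1(\Omega)$-norm. Hence both forms are densely defined, symmetric, nonnegative, and closed on $\mathring{H}^1(\Omega)$ and $H^1(\Omega)$, respectively, and the representation theorem yields unique nonnegative self-adjoint operators $A_D$ and $A_N$.

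For part $(i)$, integration by parts gives $(A_{min}f,g)_{L^2(\Omega)}=\mathfrak{a}_D[f,g]$ for all $f\in\dom(A_{min})=\mathring{H}^2(\Omega)$ and $g\in\mathring{H}^1(\Omega)$; since $\mathring{H}^2(\Omega)$ is dense in $\mathring{H}^1(\Omega)$ in the $H^1$-norm, the closure of the form $f\mapsto(A_{min}f,f)$ has form domain $\mathring{H}^1(\Omega)$, so $A_D$ is by construction the Friedrichs extension $A_F$. To identify its domain, the representation theorem says $f\in\dom(A_D)$ if and only if $f\in\mathring{H}^1(\Omega)$ and $\mathfrak{a}_D[f,g]=(h,g)_{L^2(\Omega)}$ for all $g\in\mathring{H}^1(\Omega)$ and some $h\in L^2(\Omega)$; testing against $g\in C_0^\infty(\Omega)$ forces $h=-\Delta f+Vf$ in the distributional sense, whence $\Delta f\in L^2(\Omega)$ and $\dom(A_D)=\{f\in\mathring{H}^1(\Omega)\mid\Delta f\in L^2(\Omega)\}$, matching \eqref{afaf}. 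Invoking the $H^{3/2}$-regularity of $\dom(A_D)$ recorded above (\cite{GM08,JK95,FMM98}) gives $\dom(A_D)\subset H^{3/2}_\Delta(\Omega)$, while $f\in\mathring{H}^1(\Omega)$ forces $\gamma_Df=0$; conversely any $f\in H^{3/2}_\Delta(\Omega)$ with $\gamma_Df=0$ lies in $\mathring{H}^1(\Omega)$ with $\Delta f\in L^2(\Omega)$, establishing the stated description.

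For part $(ii)$, the same representation argument shows $\dom(A_N)\subset\{f\in H^1(\Omega)\mid\Delta f\in L^2(\Omega)\}$ with $A_Nf=-\Delta f+Vf$, and the $H^{3/2}$-regularity result again yields $\dom(A_N)\subset H^{3/2}_\Delta(\Omega)$; this is exactly the regularity needed for the Neumann trace $\gamma_Nf\in L^2(\partial\Omega)$ to be defined through \eqref{extensions}. The decisive step is the generalized Green identity $\mathfrak{a}_N[f,g]=(-\Delta f+Vf,g)_{L^2(\Omega)}+(\gamma_Nf,\gamma_Dg)_{L^2(\partial\Omega)}$, valid for $f\in H^{3/2}_\Delta(\Omega)$ and $g\in H^1(\Omega)$. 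Comparing this with $\mathfrak{a}_N[f,g]=(h,g)_{L^2(\Omega)}$ and using that $\gamma_D(H^1(\Omega))$ is dense in $L^2(\partial\Omega)$ forces $\gamma_Nf=0$, while running the identity backwards shows that every $f\in H^{3/2}_\Delta(\Omega)$ with $\gamma_Nf=0$ belongs to $\dom(A_N)$; this gives the description of $\dom(A_N)$.

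Finally, the spectral assertions are soft. Because $\Omega$ is a bounded Lipschitz domain, the embeddings $\mathring{H}^1(\Omega)\hookrightarrow L^2(\Omega)$ and $H^1(\Omega)\hookrightarrow L^2(\Omega)$ are compact (Rellich), so $A_D$ and $A_N$, whose form domains are these spaces, have compact resolvents and purely discrete spectra. The Poincar\'e inequality on $\mathring{H}^1(\Omega)$ gives $\mathfrak{a}_D[f,f]\ge(\nabla f,\nabla f)_{(L^2(\Omega))^n}\ge c\Vert f\Vert_{L^2(\Omega)}^2$ for some $c>0$, so $A_D\ge c$ and $\sigma(A_D)\subset(0,\infty)$, whereas for $A_N$ only nonnegativity of the form is available, giving $\sigma(A_N)\subset[0,\infty)$. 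I expect the only genuine difficulty to reside in the $H^{3/2}$-regularity of the two domains, which enters as a citation; granting it, the sole delicate remaining point is the extraction of the natural Neumann boundary condition via the Green identity.
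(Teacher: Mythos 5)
Your proposal is correct and is essentially the argument the paper intends: the paper gives no proof of Theorem~\ref{tdn}, stating it as a collection of well-known facts and deferring exactly the one nontrivial ingredient you also defer --- the $H^{3/2}$-regularity of $\dom(A_D)$ and $\dom(A_N)$ on a Lipschitz domain --- to \cite[Lemma 3.4 and Lemma 4.8]{GM08}, \cite{JK81,JK95}, and \cite{FMM98}. The details you supply (form representation, identification of $A_D$ with $A_F$ via \eqref{afaf}, the Green-identity extraction of the Neumann condition, and the Rellich/Poincar\'e spectral conclusions) are the standard arguments behind those citations and are sound.
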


\section{Boundary conditions for the Krein--von Neumann realization}\label{sec3}

Our goal in this section is to obtain an 
explicit description of the domain of the Krein--von Neumann extension $A_{K}$ in terms 
of Dirichlet and Neumann boundary traces. For this we describe an extension procedure of the trace maps $\gamma_D$ and $\gamma_N$ in \eqref{extensions} onto $\dom(A_{max})$ from \cite{BM14}.
We recall that for $\varphi\in H^1(\partial\Omega)$ and $z \in\rho(A_D)$, the boundary value problem
\begin{equation}\label{bvp}
 -\Delta f+Vf = z f,\quad \gamma_D f=\varphi,
\end{equation}
admits a unique solution $f_z(\varphi)\in H^{3/2}_\Delta(\Omega)$. Making use of this fact and the trace operators \eqref{extensions} we define the Dirichlet-to-Neumann operator 
 $M(z)$, $z \in \rho(A_D)$, as follows:
\begin{equation}\label{dnmap}
 M(z):L^2(\partial\Omega)\supset H^1(\partial\Omega)\rightarrow L^2(\partial\Omega),\quad \varphi\mapsto - \gamma_N f_z(\varphi),
\end{equation}
where  $f_z(\varphi)\in H^{3/2}_\Delta(\Omega)$ is the unique solution of \eqref{bvp}. It can be shown that $M(z)$ is an unbounded operator in $L^2(\partial\Omega)$.
Moreover, if $z\in\rho(A_D)\cap\rho(A_N)$ then $M(z)$ is invertible and 
the inverse $M(z)^{-1}$ is a bounded operator defined on $L^2(\partial\Omega)$. Considering 
$z=i$, we set
\begin{equation}
 \Sigma:=\Im\,(-M(i)^{-1}).
\end{equation}
The imaginary part
$\Im\, M(i)$ of $M(i)$ is a densely defined bounded operator in $L^2(\partial\Omega)$ and hence it admits a bounded closure 
\begin{equation}
 \Lambda:=\overline{\Im (M(i))}
\end{equation}
in $L^2(\partial\Omega)$. Both operators $\Sigma$ and $\Lambda$ are self-adjoint and invertible with unbounded inverses. Introducing the boundary spaces 
\begin{equation}
 \mathscr G_D(\partial\Omega):=\bigl\{\gamma_D f \, \big| \, f\in\dom (A_N)\bigr\}
 \end{equation}
 and
 \begin{equation}
 \mathscr G_N(\partial\Omega):=\bigl\{\gamma_N f \, \big| \, f\in\dom (A_D)\bigr\},
 \end{equation}
we equip $\mathscr G_D(\partial\Omega)$ and $\mathscr G_N(\partial\Omega)$ with the inner products
\begin{equation}
 (\varphi,\psi)_{\mathscr G_D(\partial\Omega)}:=\bigl(\Sigma^{-1/2}\varphi,\Sigma^{-1/2}\psi\bigr)_{L^2(\partial\Omega)},\quad\varphi,\psi\in \mathscr G_D(\partial\Omega),
\end{equation}
and
\begin{equation}
 (\varphi,\psi)_{\mathscr G_N(\partial\Omega)}:=\bigl(\Lambda^{-1/2}\varphi,\Lambda^{-1/2}\psi\bigr)_{L^2(\partial\Omega)},\quad\varphi,\psi\in \mathscr G_N(\partial\Omega),
\end{equation}
respectively. Then $\mathscr G_D(\partial\Omega)$ and $\mathscr G_N(\partial\Omega)$ both become Hilbert spaces which are dense in $L^2(\partial\Omega)$. The
corresponding adjoint (i.e., conjugate dual) spaces will be denoted by $\mathscr G_D(\partial\Omega)^*$ and $\mathscr G_N(\partial\Omega)^*$, respectively. 
The following result can be found in \cite[Section 4.1]{BM14}.

\begin{theorem}\label{bm14}
 Assume Hypothesis~\ref{h2.1}. Then the Dirichlet trace operator $\gamma_D$ and Neumann trace operator $\gamma_N$ in \eqref{extensions} can be extended
 by continuity to surjective mappings 
 \begin{equation}
  \widetilde\gamma_D:\dom (A_{max})\rightarrow \mathscr G_N(\partial\Omega)^* 
  \, \text{ and } \, 
  \widetilde\gamma_N:\dom (A_{max})\rightarrow \mathscr G_D(\partial\Omega)^*
 \end{equation}
such that $\ker(\widetilde\gamma_D)=\ker(\gamma_D)=\dom (A_D)$ and $\ker(\widetilde\gamma_N)=\ker(\gamma_N)=\dom (A_N)$.
\end{theorem}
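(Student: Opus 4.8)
The plan is to construct the extended traces by duality from the second Green identity and to reduce every claim to mapping properties of the solution operator at the point $z=i$. Write $P(i)\colon H^1(\partial\Omega)\to H^{3/2}_\Delta(\Omega)$ for the solution map $\varphi\mapsto f_i(\varphi)$ of \eqref{bvp}; by construction $\ran P(i)\subset\ker(A_{max}-i)$. The two ingredients I would set up first are the Green identity
\[
(A_{max}f,g)_{L^2(\Omega)}-(f,A_{max}g)_{L^2(\Omega)}=(\gamma_D f,\gamma_N g)_{L^2(\partial\Omega)}-(\gamma_N f,\gamma_D g)_{L^2(\partial\Omega)},\quad f,g\in H^{3/2}_\Delta(\Omega),
\]
and the von Neumann decomposition $\dom(A_{max})=\dom(A_D)\,\dotplus\,\ker(A_{max}-i)$, which holds because $i\in\rho(A_D)$ turns $A_D-i$ into a bijection of $\dom(A_D)$ onto $L^2(\Omega)$. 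Since $\gamma_D g=0$ and $\gamma_N g$ runs through all of $\mathscr G_N(\partial\Omega)$ as $g$ runs through $\dom(A_D)$, I define, for $f\in\dom(A_{max})$,
\[
\langle\widetilde\gamma_D f,\gamma_N g\rangle:=(A_{max}f,g)_{L^2(\Omega)}-(f,A_D g)_{L^2(\Omega)},\quad g\in\dom(A_D),
\]
and, symmetrically, $\langle\widetilde\gamma_N f,\gamma_D g\rangle:=(f,A_N g)_{L^2(\Omega)}-(A_{max}f,g)_{L^2(\Omega)}$ for $g\in\dom(A_N)$; the Green identity shows that on $H^{3/2}_\Delta(\Omega)$ these reproduce $(\gamma_D f,\gamma_N g)$ and $(\gamma_N f,\gamma_D g)$, so $\widetilde\gamma_D$, $\widetilde\gamma_N$ are genuine extensions once they are shown to be bounded.

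Decomposing $f=f_D+f_i$ and using the self-adjointness of $A_D$ to cancel the $f_D$-part, the first pairing collapses to the clean formula $\langle\widetilde\gamma_D f,\gamma_N g\rangle=-(f_i,(A_D+i)g)_{L^2(\Omega)}$. Everything then hinges on the imaginary part of the Weyl function: writing Green's identity for two solutions $P(i)\varphi$, $P(i)\psi$ gives $M(i)-M(i)^*=2i\,P(i)^*P(i)$, hence $\Lambda=\overline{\Im M(i)}=\overline{P(i)^*P(i)}$, while the same computation identifies $\gamma_N(A_D+i)^{-1}=-P(i)^*$. Substituting $g=(A_D+i)^{-1}k$ therefore turns $\gamma_N g$ into $-P(i)^*k$ and yields, via the polar decomposition $P(i)^*=\Lambda^{1/2}U^*$,
\[
\|\gamma_N g\|_{\mathscr G_N(\partial\Omega)}=\|\Lambda^{-1/2}P(i)^*k\|_{L^2(\partial\Omega)}=\|U^*k\|_{L^2(\partial\Omega)}=\|P_{\ker(A_{max}-i)}k\|_{L^2(\Omega)}.
\]
Since $f_i\in\ker(A_{max}-i)$, Cauchy--Schwarz now gives $|\langle\widetilde\gamma_D f,\gamma_N g\rangle|=|(f_i,P_{\ker(A_{max}-i)}k)|\le\|f_i\|_{L^2(\Omega)}\|\gamma_N g\|_{\mathscr G_N(\partial\Omega)}$, with equality attainable; this simultaneously proves that the pairing depends on $\gamma_N g$ alone (well-definedness, since $\ker P(i)^*=(\ker(A_{max}-i))^{\perp}$), that $\widetilde\gamma_D$ maps $\dom(A_{max})$ continuously in the graph norm into $\mathscr G_N(\partial\Omega)^*$, and that $\|\widetilde\gamma_D f\|_{\mathscr G_N(\partial\Omega)^*}=\|f_i\|_{L^2(\Omega)}$, i.e.\ $\widetilde\gamma_D$ restricts to an isometry of $\ker(A_{max}-i)$ onto its image.

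The three remaining assertions are then short. The identity $\|\widetilde\gamma_D f\|_{\mathscr G_N(\partial\Omega)^*}=\|f_i\|_{L^2(\Omega)}$ shows $\widetilde\gamma_D f=0$ iff $f_i=0$ iff $f\in\dom(A_D)$, so $\ker\widetilde\gamma_D=\dom(A_D)=\ker\gamma_D$ by Theorem~\ref{tdn}(i); the extension property was recorded above. For surjectivity, given $F\in\mathscr G_N(\partial\Omega)^*$ represented by $\eta\in\mathscr G_N(\partial\Omega)$ via Riesz, the element $f_i:=P(i)\Lambda^{-1}\eta=U\Lambda^{-1/2}\eta\in\ker(A_{max}-i)$ satisfies $\widetilde\gamma_D f_i=F$, so $\widetilde\gamma_D$ is onto. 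The statements for $\widetilde\gamma_N$ follow mutatis mutandis, decomposing along $\dom(A_{max})=\dom(A_N)\,\dotplus\,\ker(A_{max}-i)$ and replacing $\Lambda=\overline{\Im M(i)}$ and the Dirichlet-to-Neumann map by $\Sigma=\Im(-M(i)^{-1})$ and the Neumann-to-Dirichlet map.

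The technical heart, and the step I expect to be the main obstacle, is the imaginary-part identity $\Im M(i)=P(i)^*P(i)$ together with the ensuing reinterpretation of the boundary norm $\|\cdot\|_{\mathscr G_N(\partial\Omega)}$ as the $L^2(\Omega)$-norm of an orthogonal projection of $(A_D+i)g$: this is precisely what upgrades an a priori weak duality definition into an isometric isomorphism and thereby delivers boundedness, well-definedness, the kernel, and surjectivity all at once. The accompanying conceptual difficulty is that elements of $\ker(A_{max}-i)$ carry no Sobolev regularity, so the classical traces \eqref{extensions} are unavailable on them and every estimate must be routed through the extended pairings and the operator $P(i)$ rather than through genuine boundary values.
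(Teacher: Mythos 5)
Your proof is correct, but there is nothing in the paper to compare it against line by line: the paper does not prove Theorem~\ref{bm14} at all, it quotes the result from \cite[Section 4.1]{BM14}. What you have produced is essentially a self-contained, concrete version of the argument underlying that reference. In the quasi boundary triple framework of \cite{BM14}, the extension of the trace maps to $\dom(A_{max})$ rests on precisely the two identities you single out as the technical heart: the adjoint relation $\gamma_N(A_D+i)^{-1}=-P(i)^*$ for the solution operator ($\gamma$-field), and $\Im M(i)=P(i)^*P(i)$, which converts the $\mathscr G_N(\partial\Omega)$-norm of $\gamma_N g$, $g\in\dom(A_D)$, into the $L^2(\Omega)$-norm of the orthogonal projection of $(A_D+i)g$ onto $\ker(A_{max}-i)$, and thus makes $\widetilde\gamma_D$ isometric on the defect subspace. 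Your derivation of these identities from the second Green identity, together with the decomposition $\dom(A_{max})=\dom(A_D)\,\dotplus\,\ker(A_{max}-i)$, is exactly the mechanism used there, carried out by hand rather than invoked from the abstract theory; the well-definedness argument via $\ker P(i)^*=(\ker(A_{max}-i))^{\perp}$ and the treatment of $\widetilde\gamma_N$ via $\Sigma=\Im(-M(i)^{-1})$ are likewise faithful to the source.

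Two points should be tightened. In the surjectivity step, $P(i)\Lambda^{-1}\eta$ is not defined as written: $\eta\in\mathscr G_N(\partial\Omega)=\ran\Lambda^{1/2}$ need not belong to $\dom(\Lambda^{-1})=\ran\Lambda$, and $P(i)$ acts only on $H^1(\partial\Omega)$; the rigorous object is your second expression $U\Lambda^{-1/2}\eta$, built from the closure $\overline{P(i)}=U\Lambda^{1/2}$, so keep only that one. Also, the phrase ``extended by continuity'' presupposes that $H^{3/2}_\Delta(\Omega)$ is dense in $\dom(A_{max})$ with respect to the graph norm; this does follow from your own observations ($\ran P(i)$ is dense in $\ker(A_{max}-i)$ because $\ker P(i)^*=(\ker(A_{max}-i))^{\perp}$, and on $\ker(A_{max}-i)$ the graph norm is equivalent to the $L^2(\Omega)$-norm), but you should say so explicitly, since it is what identifies your explicitly constructed map as \emph{the} continuous extension of $\gamma_D$.
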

In a similar manner the boundary value problem \eqref{bvp} can be considered for all $\varphi\in\mathscr G_N(\partial\Omega)^*$ and the Dirichlet-to-Neumann operator 
$M(\cdot)$ in \eqref{dnmap} can be extended. More precisely, the following statement holds.

\begin{theorem}\label{bm142}
 Assume Hypothesis~\ref{h2.1} and let $\widetilde\gamma_D$ and $\widetilde\gamma_N$ be the extended Dirichlet and Neumann trace operator from Theorem~\ref{bm14}.
 Then the following are true:
 \begin{itemize}
  \item [$(i)$] For $\varphi\in \mathscr G_N(\partial\Omega)^*$ and $z \in\rho(A_D)$ the boundary value problem
\begin{equation}\label{bvp2}
 -\Delta f+Vf= z f,\quad \widetilde\gamma_D f=\varphi,
\end{equation}
admits a unique solution $f_z(\varphi)\in \dom(A_{max})$.
\item [$(ii)$] For $z \in\rho(A_D)$ the Dirichlet-to-Neumann operator $M(z)$ in 
\eqref{dnmap} admits a continuous extension
\begin{equation}
 \widetilde M(z):\mathscr G_N(\partial\Omega)^*\rightarrow 
 \mathscr G_D(\partial\Omega)^*,\quad \varphi\mapsto -\widetilde\gamma_N f_z(\varphi),
\end{equation}
where  $f_z(\varphi)\in \dom(A_{max})$ is the unique solution of \eqref{bvp2}.
 \end{itemize}
\end{theorem}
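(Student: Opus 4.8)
The plan is to derive both assertions from the behavior of the extended Dirichlet trace on the kernel $\ker(A_{max}-z)$, so I would begin by recording the standard regular-point decomposition of the maximal domain: for every $z\in\rho(A_D)$,
\begin{equation*}
 \dom(A_{max})=\dom(A_D)\,\dotplus\,\ker(A_{max}-z).
\end{equation*}
Indeed, given $f\in\dom(A_{max})$, the element $f_1:=(A_D-z)^{-1}(A_{max}-z)f$ lies in $\dom(A_D)$ and satisfies $(A_{max}-z)f_1=(A_{max}-z)f$, so that $f_2:=f-f_1\in\ker(A_{max}-z)$; the sum is direct because $z\in\rho(A_D)$ forces $\dom(A_D)\cap\ker(A_{max}-z)=\{0\}$. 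Since $A_{max}=A_{min}^*$ is closed, $\dom(A_{max})$ is complete in the graph norm, a fact I will use below.

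For part $(i)$ I would combine this decomposition with the identity $\ker(\widetilde\gamma_D)=\dom(A_D)$ from Theorem~\ref{bm14}. Uniqueness is then immediate: if $f,f'\in\ker(A_{max}-z)$ both have $\widetilde\gamma_D$-trace equal to $\varphi$, then $f-f'\in\ker(A_{max}-z)\cap\ker(\widetilde\gamma_D)=\ker(A_{max}-z)\cap\dom(A_D)=\{0\}$. For existence, the surjectivity of $\widetilde\gamma_D$ provides some $g\in\dom(A_{max})$ with $\widetilde\gamma_D g=\varphi$; writing $g=g_1+g_2$ along the decomposition above with $g_1\in\dom(A_D)$ and $g_2\in\ker(A_{max}-z)$, the relation $\widetilde\gamma_D g_1=0$ yields $\widetilde\gamma_D g_2=\varphi$, so that $f_z(\varphi):=g_2$ solves \eqref{bvp2}. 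Equivalently, the restriction $\widetilde\gamma_D\!\upharpoonright_{\ker(A_{max}-z)}$ is a bijection onto $\mathscr G_N(\partial\Omega)^*$, and $\varphi\mapsto f_z(\varphi)$ is its inverse.

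For part $(ii)$ the task is to upgrade this inverse to a bounded map. On $\ker(A_{max}-z)$ the graph norm satisfies $\|f\|_{\dom(A_{max})}^2=(1+|z|^2)\|f\|_{L^2(\Omega)}^2$, so this subspace is closed in the complete space $\dom(A_{max})$ and is therefore itself a Banach (indeed Hilbert) space on which the graph and $L^2$-norms are equivalent. Since $\widetilde\gamma_D$ is continuous on $\dom(A_{max})$ by Theorem~\ref{bm14}, its restriction $\widetilde\gamma_D\!\upharpoonright_{\ker(A_{max}-z)}$ is a continuous bijection between the Banach spaces $\ker(A_{max}-z)$ and $\mathscr G_N(\partial\Omega)^*$, whence the bounded inverse theorem renders $\varphi\mapsto f_z(\varphi)$ a bounded map into $\dom(A_{max})$. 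Composing with the continuous operator $\widetilde\gamma_N:\dom(A_{max})\to\mathscr G_D(\partial\Omega)^*$ shows that $\widetilde M(z)\colon\varphi\mapsto-\widetilde\gamma_N f_z(\varphi)$ is continuous from $\mathscr G_N(\partial\Omega)^*$ to $\mathscr G_D(\partial\Omega)^*$. Finally I would check that $\widetilde M(z)$ extends $M(z)$: for $\varphi\in H^1(\partial\Omega)\subset\mathscr G_N(\partial\Omega)^*$ the solution $f_z(\varphi)\in H^{3/2}_\Delta(\Omega)$ of \eqref{bvp} lies in $\dom(A_{max})\cap\ker(A_{max}-z)$, and because $\widetilde\gamma_D,\widetilde\gamma_N$ agree with $\gamma_D,\gamma_N$ on $H^{3/2}_\Delta(\Omega)$ it has $\widetilde\gamma_D f_z(\varphi)=\varphi$; by the uniqueness in $(i)$ it coincides with the solution of \eqref{bvp2}, so $\widetilde M(z)\varphi=-\gamma_N f_z(\varphi)=M(z)\varphi$.

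I expect the genuine obstacle to lie in the continuity statement of $(ii)$ rather than in the algebraic solvability of $(i)$. The delicate points are verifying that $\ker(A_{max}-z)$, carrying the graph norm inherited from $\dom(A_{max})$, is a Banach space on which $\widetilde\gamma_D$ is bounded with bounded inverse, so that the bounded inverse theorem is legitimately applicable, together with the careful matching of the two notions of solution — those of \eqref{bvp} and of \eqref{bvp2} — required to identify $\widetilde M(z)$ as an honest extension of $M(z)$.
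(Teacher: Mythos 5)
Your proposal is sound, but there is nothing in the paper to compare it against: the paper does not prove Theorem~\ref{bm142} at all. Both Theorem~\ref{bm14} and Theorem~\ref{bm142} are imported from \cite{BM14} (Section~4.1), where the extensions $\widetilde\gamma_D$, $\widetilde\gamma_N$, $\widetilde M(z)$ are constructed. What you have produced is a self-contained deduction of Theorem~\ref{bm142} from Theorem~\ref{bm14}, and it holds up. The regular-point decomposition $\dom(A_{max})=\dom(A_D)\,\dotplus\,\ker(A_{max}-z)$, $z\in\rho(A_D)$, is verified exactly as you say; note that the paper itself uses the $z=0$ instance of this decomposition, without detailed proof, in its proof of Theorem~\ref{akthm} (cf.\ \eqref{decoaa}), so your starting point is very much in the spirit of the paper. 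Combining the decomposition with $\ker(\widetilde\gamma_D)=\dom(A_D)$ and the surjectivity of $\widetilde\gamma_D$ from Theorem~\ref{bm14} gives part $(i)$ cleanly. For part $(ii)$, your observation that $\ker(A_{max}-z)$ is closed in the graph-norm Hilbert space $\dom(A_{max})$ (indeed, on it the graph norm is $(1+|z|^2)^{1/2}$ times the $L^2$-norm), so that $\widetilde\gamma_D$ restricts to a continuous bijection onto the Hilbert space $\mathscr G_N(\partial\Omega)^*$ and the bounded inverse theorem applies, is a correct and efficient way to get continuity of $\varphi\mapsto f_z(\varphi)$, hence of $\widetilde M(z)=-\widetilde\gamma_N f_z(\cdot)$. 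Your final identification of $\widetilde M(z)$ as an extension of $M(z)$, matching the solution of \eqref{bvp} with that of \eqref{bvp2} via the uniqueness in $(i)$, is also the right check. The only point you should state explicitly rather than leave implicit is the inclusion $H^1(\partial\Omega)\subset L^2(\partial\Omega)\hookrightarrow\mathscr G_N(\partial\Omega)^*$: it rests on the dense embedding of $\mathscr G_N(\partial\Omega)$ in $L^2(\partial\Omega)$ and the associated Gelfand triple, which is precisely the framework of \cite{BM14}, and it is what makes the word ``extension'' in both the trace maps and in $\widetilde M(z)$ meaningful. With that understood, your argument is complete.
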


Now we are able to state our main result in this section: A description of the domain of the 
Krein--von Neumann extension $A_K$ 
in terms of Dirichlet and Neumann boundary traces. The extended Dirichlet-to-Neumann map 
at $z = 0$ will enter as a regularization parameter
in the boundary condition. For $C^\infty$-smooth domains this result goes back to Grubb \cite{Gr68}, where a 
certain class of elliptic differential operators with smooth coefficients is discussed 
systematically. For the special case of a so-called quasi-convex domains Theorem~\ref{akthm} 
reduces to \cite[Theorem 5.5]{AGMST10} and \cite[Theorem 13.1]{GM11}. In an abstract setting the Krein--von Neumann extension appears in a similar form in \cite[Example 3.9]{BM14}. 

\begin{theorem}\label{akthm}
Assume Hypothesis~\ref{h2.1} and let $\widetilde\gamma_D$, $\widetilde\gamma_N$ and $\widetilde M(0)$ be as in Theorem~\ref{bm14} and Theorem~\ref{bm142}. 
Then the  Krein--von Neumann extension $A_K$ of 
$A_{min}$ is given by 
\begin{equation} 
A_K = - \Delta+V,    \quad 
 \dom(A_K) = \bigl\{f\in\dom(A_{max}) \, \big| \, 
\widetilde{\gamma}_N f+\widetilde M(0)\widetilde{\gamma}_D f=0\bigr\}.
\end{equation}
\end{theorem}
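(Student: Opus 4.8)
The plan is to establish the asserted identity of domains by proving two inclusions, exploiting the defining decomposition $\dom(A_K)=\dom(A_{min})\dotplus\ker(A_{max})$ from \eqref{akjussi} together with the extension results in Theorem~\ref{bm14} and Theorem~\ref{bm142}. Throughout I would use that $0\in\rho(A_D)$, which holds since $\sigma(A_D)\subset(0,\infty)$ by Theorem~\ref{tdn}(i); this is exactly what makes the boundary value problem \eqref{bvp2} with $z=0$ and the extended Dirichlet-to-Neumann operator $\widetilde M(0)$ available. I would also record at the outset that the sum defining $\dom(A_K)$ is genuinely direct, because $\dom(A_{min})\cap\ker(A_{max})=\{0\}$ by the strict positivity of $A_{min}$ (Lemma~\ref{lemmaa}(iii)).

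For the inclusion $\dom(A_K)\subseteq\{f\in\dom(A_{max}) \mid \widetilde\gamma_N f+\widetilde M(0)\widetilde\gamma_D f=0\}$, I would decompose an arbitrary $f\in\dom(A_K)$ as $f=f_{min}+f_0$ with $f_{min}\in\dom(A_{min})$ and $f_0\in\ker(A_{max})$. Since $\dom(A_{min})\subseteq\dom(A_D)=\ker(\widetilde\gamma_D)$ and $\dom(A_{min})\subseteq\dom(A_N)=\ker(\widetilde\gamma_N)$ by Theorem~\ref{bm14}, both extended traces of $f_{min}$ vanish, so $f_{min}$ contributes nothing to the boundary condition. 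For the remaining part, $f_0\in\ker(A_{max})$ means $f_0$ solves \eqref{bvp2} with $z=0$ and $\varphi=\widetilde\gamma_D f_0$, whence Theorem~\ref{bm142}(ii) yields precisely $\widetilde\gamma_N f_0=-\widetilde M(0)\widetilde\gamma_D f_0$. Adding the two contributions gives $\widetilde\gamma_N f+\widetilde M(0)\widetilde\gamma_D f=0$.

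For the reverse inclusion I would start from $f\in\dom(A_{max})$ satisfying the boundary condition and use $0\in\rho(A_D)$ to write $f=f_D+f_0$ uniquely, with $f_D:=A_D^{-1}A_{max}f\in\dom(A_D)$ and $f_0:=f-f_D\in\ker(A_{max})$. Because $f_D\in\dom(A_D)=\ker(\widetilde\gamma_D)$, one has $\widetilde\gamma_D f=\widetilde\gamma_D f_0$, and Theorem~\ref{bm142}(ii) applied to $f_0$ gives $\widetilde\gamma_N f_0=-\widetilde M(0)\widetilde\gamma_D f_0=-\widetilde M(0)\widetilde\gamma_D f$. Substituting $\widetilde\gamma_N f=\widetilde\gamma_N f_D+\widetilde\gamma_N f_0$ into the boundary condition then collapses everything to $\widetilde\gamma_N f_D=0$, that is, $f_D\in\ker(\widetilde\gamma_N)=\dom(A_N)$. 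Thus $f_D\in\dom(A_D)\cap\dom(A_N)$, and I would conclude $f=f_D+f_0\in\dom(A_{min})\dotplus\ker(A_{max})=\dom(A_K)$.

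The step carrying the real content—and the one I expect to be the main obstacle—is the identity $\dom(A_D)\cap\dom(A_N)=\dom(A_{min})=\mathring H^2(\Omega)$, equivalently $\ker(\widetilde\gamma_D)\cap\ker(\widetilde\gamma_N)=\dom(A_{min})$, which is needed to upgrade $f_D\in\dom(A_D)\cap\dom(A_N)$ to $f_D\in\dom(A_{min})$ in the reverse inclusion. This is not a formal consequence of Theorem~\ref{bm14} by itself; rather it reflects that the pair $(\widetilde\gamma_D,\widetilde\gamma_N)$ forms a boundary triple for $A_{max}=A_{min}^*$, for which the kernel of the joint trace map coincides exactly with the minimal domain. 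This is precisely the structural fact established in \cite{BM14} (cf.\ \cite[Example 3.9]{BM14}), and it is where the Lipschitz geometry enters through the $H^{3/2}$-regularity of $\dom(A_D)$ and $\dom(A_N)$ guaranteed by Theorem~\ref{tdn}; I would therefore invoke it at this point rather than reprove it.
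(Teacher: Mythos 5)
Your proposal is correct and follows essentially the same route as the paper's own proof: the same two inclusions, the same decompositions $\dom(A_K)=\dom(A_{min})\dotplus\ker(A_{max})$ and $\dom(A_{max})=\dom(A_D)\dotplus\ker(A_{max})$, the same application of Theorem~\ref{bm142}\,$(ii)$ to the $\ker(A_{max})$-component, and the same reduction to $\widetilde\gamma_N f_D=0$ followed by the identity $\dom(A_D)\cap\dom(A_N)=\dom(A_{min})=\mathring{H}^2(\Omega)$. The only minor differences are that you make the second decomposition explicit via $f_D=A_D^{-1}A_{max}f$ (the paper merely calls it ``not difficult to check''), and you justify the key identity by the boundary-triple structure from \cite{BM14}, whereas the paper invokes its relation \eqref{disjoint} together with $A_D=A_F$.
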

\begin{proof}   
We recall that the Krein--von Neumann extension $A_K$ of $A_{min}$ is defined on
\begin{equation}\label{akakdom1}
 \dom (A_K)=\dom(A_{min})\,
\dotplus\,\ker(A_{max}).
\end{equation}
Thus, from Lemma~\ref{lemmaa}\,$(ii)$ one concludes 
\begin{equation}\label{akakdom2}
 \dom (A_K)={\mathring H}^2(\Omega)\,
\dotplus\,\ker(A_{max,\Omega}).
\end{equation}
Next, we show the inclusion
\begin{equation}\label{bitteschoen}
\dom(A_K)\subseteq  
\bigl\{f\in\dom(A_{max}) \, \big| \, \widetilde \gamma_N f
+\widetilde M(0)\widetilde \gamma_D f=0\bigr\}.
\end{equation}
Fix $f\in\dom (A_K)$ and decompose $f$ in the form $f=f_{min}+f_0$, where 
$f_{min}\in\mathring{H}^2(\Omega)$ and $f_0\in\ker(A_{max})$ (cf.\ \eqref{akakdom2}). Thus, 
$\gamma_D f_{min}=\widetilde{\gamma}_D f_{min}=0$ and 
$\gamma_N f_{min}=\widetilde{\gamma}_N f_{min}=0$, and hence it follows from 
Theorem~\ref{bm142}\,$(ii)$ that 
\begin{equation}
\widetilde M(0)\widetilde{\gamma}_D f
=\widetilde M (0)\widetilde\gamma_D(f_{min}+f_0)
=\widetilde M (0)\widetilde\gamma_D f_0=     -\widetilde\gamma_N f_0=     -\widetilde\gamma_N f.
\end{equation} 
Thus, $\widetilde \gamma_N f+\widetilde M(0)\widetilde \gamma_D f=0$ and the inclusion \eqref{bitteschoen} holds.

Next we verify the opposite inclusion 
\begin{equation}\label{bitteschoen2}
\dom(A_K)\supseteq 
\bigl\{f\in\dom(A_{max}) \, \big| \,\widetilde \gamma_N f
+\widetilde M(0)\widetilde \gamma_D f=0\bigr\}.
\end{equation}
We use the direct sum decomposition
\begin{equation}\label{decoaa}
 \dom (A_{max})=\dom (A_D)\,\dot+\,\ker (A_{max}),
\end{equation}
which is not difficult to check.
Assuming that $f\in\dom(A_{max,\Omega})$ satisfies the boundary condition
\begin{equation}\label{bcbc}
\widetilde M(0)\widetilde\gamma_D f+\widetilde\gamma_N f=0, 
\end{equation}
according to the decomposition \eqref{decoaa} we write $f$ in the form 
$f=f_D+f_0$, where $f_D\in\dom(A_D)$ and $f_0\in\ker(A_{max})$. Thus,  
$\gamma_D f_D=\widetilde{\gamma}_D f_D=0$ by Theorem~\ref{bm14} and with the 
help of Theorem~\ref{bm142}\,$(ii)$ one obtains 
\begin{equation}
\widetilde M(0)\widetilde \gamma_D f
=\widetilde M(0)\widetilde \gamma_D(f_D+f_0)
=\widetilde M(0)\widetilde\gamma_D f_0=-\widetilde\gamma_N f_0.
\end{equation} 
Taking into account the boundary condition \eqref{bcbc}  one concludes 
\begin{equation}
 -\widetilde\gamma_N f=\widetilde M(0)\widetilde \gamma_D f=-\widetilde\gamma_N f_0, 
\end{equation}
and hence
\begin{equation}
0=\widetilde\gamma_N(f-f_0)=\widetilde\gamma_N f_D.
\end{equation}
Together with Theorem~\ref{bm14} this implies 
$f_D\in\ker (\widetilde\gamma_N)=\ker(\gamma_N)=\dom(A_N)$. Thus, one arrives at 
\begin{equation}\label{o765f4}
f_D\in\dom(A_D)\cap\dom(A_N)=\dom(A_{min})
=\mathring{H}^2(\Omega), 
\end{equation}
where \eqref{disjoint} and $A_D=A_F$ was used (cf.\ Theorem~\ref{tdn}\,$(i)$). Summing up, 
one has
\begin{equation}
 f=f_D+f_0\in \mathring{H}^2(\Omega)\,\dotplus\,\ker(A_{max})=\dom (A_K),
\end{equation}
which shows \eqref{bitteschoen2} and completes the proof of Theorem~\ref{akthm}.
\end{proof}

\section{Spectral asymptotics of the Krein--von Neumann extension}\label{sec4}

As the main result in this section we derive the following Weyl-type spectral 
asymptotics for the Krein--von Neumann extension $A_{K}$ of $A_{min}$. 

\begin{theorem}\lb{mainthm} 
Assume Hypothesis~\ref{h2.1}. Let $\{\lambda_j\}_{j\in\bbN}\subset(0,\infty)$ be the strictly 
positive eigenvalues of the  Krein--von Neumann extension $A_K$ enumerated in nondecreasing 
order counting multiplicity, and let
\begin{equation}\label{4455}
N(\lambda,A_K):=\#\bigl\{j\in\bbN : 0<\lambda_j\leq\lambda\bigr\}, \quad   \lambda>0,
\end{equation}
be the eigenvalue distribution function for $A_K$. 
Then the following Weyl asymptotic formula holds, 
\begin{equation}
N(\lambda,A_K)\underset{\lambda\to\infty}{=}\frac{v_n\,|\Omega|}{(2\pi)^n}\,\lambda^{n/2}
+O\big(\lambda^{(n-(1/2))/2}\big),
\end{equation} 
where $v_n = \pi^{n/2}/ \Gamma((n/2)+1)$ denotes the $($Euclidean\,$)$ volume of the unit ball in 
$\bbR^n$ $($with $\Gamma(\cdot)$ the classical Gamma function \cite[Sect.\ 6.1]{AS72}$)$ 
and $|\Omega|$ represents the $($$n$-dimensional$)$ Lebesgue measure of $\Omega$.
\end{theorem}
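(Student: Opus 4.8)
The plan is to reduce the spectral analysis of the nonzero eigenvalues of $A_K$ to a buckling-type variational eigenvalue problem and then to invoke the abstract Weyl-type asymptotic formula of Kozlov \cite{Ko83}. The starting point is the spectral equivalence \eqref{schoenesache}. With the forms $\mathfrak a$ and $\mathfrak t$ from \eqref{aformintro} defined on $\dom(A_{min})=\mathring H^2(\Omega)$, the form $\mathfrak a[f,g]=(A_{min}f,A_{min}g)_{L^2(\Omega)}$ is, by Lemma~\ref{lemmaa}$(ii)$,$(iii)$, an inner product on $\mathring H^2(\Omega)$ equivalent to the $H^2$-inner product, so that $(\mathring H^2(\Omega),\mathfrak a)$ is a Hilbert space. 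Since $\mathfrak t$ is bounded and nonnegative on this space and, thanks to the compact embedding $\mathring H^2(\Omega)\hookrightarrow L^2(\Omega)$, gives rise to a nonnegative compact operator $T$ via $\mathfrak t[f,g]=\mathfrak a[Tf,g]$, Lemma~\ref{mainlem} provides the bijective correspondence $\lambda\in\sigma_p(A_K)\setminus\{0\}\Leftrightarrow\lambda^{-1}\in\sigma_p(T)$, counting multiplicities. Writing $N^+(\mu,T)$ for the number of eigenvalues of $T$ in $[\mu,\infty)$, one then has $N(\lambda,A_K)=N^+(\lambda^{-1},T)$ for all $\lambda>0$, and it remains to find the asymptotics of the latter as $\lambda\to\infty$.

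Next I would recast the eigenvalue problem for $T$ as a concrete variational problem. For $f\in\mathring H^2(\Omega)\subset\mathring H^1(\Omega)$, an integration by parts yields
\begin{equation}
\mathfrak t[f,f]=(f,A_{min}f)_{L^2(\Omega)}=\int_\Omega\bigl(|\nabla f|^2+V|f|^2\bigr)\,dx=\mathfrak a_D[f,f],
\end{equation}
so the numerator form is precisely the Dirichlet energy, while the denominator form is $\mathfrak a[f,f]=\int_\Omega|(-\Delta+V)f|^2\,dx$. The eigenvalue equation $Tf=\mu f$ therefore reads, in weak form,
\begin{equation}
\int_\Omega(-\Delta+V)f\,\overline{(-\Delta+V)g}\,dx=\lambda\int_\Omega\bigl(\nabla f\cdot\overline{\nabla g}+Vf\bar g\bigr)\,dx,\quad g\in\mathring H^2(\Omega),
\end{equation}
with $\lambda=\mu^{-1}$; this is the weak formulation of the perturbed buckling problem $(-\Delta+V)^2f=\lambda(-\Delta+V)f$ under the clamped conditions $f\in\mathring H^2(\Omega)$. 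Thus $N(\lambda,A_K)$ equals the number of buckling eigenvalues in $(0,\lambda]$, i.e. a variational counting function for the ratio $\mathfrak a_D[f,f]/\mathfrak a[f,f]$.

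I would then apply Kozlov's theorem \cite{Ko83} to this ratio of forms. Here the denominator form is generated by a fourth-order operator with principal symbol $|\xi|^4$, the numerator form by a second-order operator with principal symbol $|\xi|^2$, and the potential $V\in L^\infty(\Omega)$ enters only through lower-order contributions. Kozlov's formula extracts the leading term from the phase-space volume
\begin{equation}
(2\pi)^{-n}\int_\Omega\vol\bigl\{\xi\in\bbR^n:|\xi|^4\le\lambda|\xi|^2\bigr\}\,dx=(2\pi)^{-n}\,|\Omega|\,\vol\bigl\{\xi:|\xi|\le\lambda^{1/2}\bigr\}=\frac{v_n|\Omega|}{(2\pi)^n}\,\lambda^{n/2},
\end{equation}
which coincides with the claimed leading term and, as expected, with the Weyl constant of the Dirichlet Laplacian. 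The lower-order terms produced by $V$ and by the cross terms in $(-\Delta+V)^2$ are subordinate and do not affect this coefficient; together with the Lipschitz regularity of $\partial\Omega$ they are absorbed into the remainder, which Kozlov's sharp estimate controls at the order $O\big(\lambda^{(n-(1/2))/2}\big)$. Undoing the substitution $\lambda=\mu^{-1}$ through $N(\lambda,A_K)=N^+(\lambda^{-1},T)$ then gives the asserted asymptotics, along the same line of reasoning as in \cite{AGMT10,AGMST10} for quasi-convex domains.

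The main obstacle is this last step: verifying that the hypotheses of Kozlov's abstract Weyl formula are genuinely met for our pair of forms under only Lipschitz regularity of $\partial\Omega$, and that the remainder indeed stays at the order $O\big(\lambda^{(n-(1/2))/2}\big)$ rather than degrading. In the quasi-convex case of \cite{AGMT10} one exploits the extra regularity $\dom(A_D),\dom(A_N)\subset H^2(\Omega)$; in the present generality one must instead work directly with the variational problem on $\mathring H^2(\Omega)$ and carefully control the boundary contributions to the remainder. A secondary technical point is to confirm, for instance by a form-bracketing argument comparing $V$ with its $L^\infty$ bounds, that the bounded potential influences only the remainder and never the leading Weyl coefficient.
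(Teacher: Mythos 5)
Your proposal is correct and follows essentially the same route as the paper: the reduction to the nonnegative compact operator $T$ on the Hilbert space $\big(\mathring{H}^2(\Omega),\mathfrak a[\cdot,\cdot]\big)$, the spectral correspondence of Lemma~\ref{mainlem}, and a direct application of Kozlov's theorem \cite{Ko83}, whose symbol-integral coefficient \eqref{trray45y} evaluates to exactly your phase-space volume $(2\pi)^{-n}v_n|\Omega|\,\lambda^{n/2}$. The only divergence is your closing worry about verifying Kozlov's hypotheses under mere Lipschitz regularity: the paper treats this as immediate, since both forms are defined on $\mathring{H}^2(\Omega)$ (the nontrivial input being Lemma~\ref{lemmaa}\,$(ii)$, i.e.\ $\dom(A_{min})=\mathring{H}^2(\Omega)$ with equivalent norms), so no boundary contributions need to be controlled and \cite{Ko83} applies as a black box with the stated remainder.
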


The proof of Theorem~\ref{mainthm} follows along the lines of 
\cite{AGMT10,AGMST10}, where the case of quasi-convex domains was investigated. The main ingredients are a general Weyl type asymptotic formula due to 
Kozlov \cite{Ko83} (see also \cite{Ko79}, \cite{Ko84} for related results) and the connection between the eigenvalues of the so-called buckling operator and
the positive eigenvalues of the Krein--von Neumann extension $A_K$ (cf.\ \cite{AGMST10}, 
\cite{AGMST13}).
We first consider the quadratic forms $\mathfrak a$ and $\mathfrak t$ on $\dom (A_{min})=\mathring{H}^2(\Omega)$ defined by 
\begin{align} 
& \mathfrak a[f,g]:=\bigl(A_{min} f,A_{min} g\bigr)_{L^2(\Omega)},\quad f,g\in\dom(\mathfrak a):=\mathring{H}^2(\Omega),   \label{aform} \\
& \mathfrak t[f,g]:=\bigl( f,A_{min} g\bigr)_{L^2(\Omega)},\quad f,g\in\dom(\mathfrak t):=\mathring{H}^2(\Omega).   \label{tform}
\end{align}
Since the graph norm of $A_{min}$ and the $H^2$-norm are equivalent on $\dom A_{min}=\mathring{H}^2(\Omega)$ by Lemma~\ref{lemmaa}\,$(ii)$,  
it follows that $\mathcal W:=(\dom(\mathfrak a);  (\cdot,\cdot)_{\mathcal W})$, where the inner product is defined by
\begin{equation}\label{cite1}
 (f,g)_{\mathcal W}:=\mathfrak a[f,g]=\bigl(A_{min} f,A_{min} g\bigr)_{L^2(\Omega)}, 
 \quad f,g\in \dom(\mathfrak a),
\end{equation}
is a Hilbert space. One observes that the embedding $\iota:\mathcal W\rightarrow L^2(\Omega)$ is compact; this is a consequence of $\Omega$ being bounded.  
Next, we consider for fixed $g\in\mathcal W$ the functional
\begin{equation}
 \mathcal W\ni f\mapsto \mathfrak t[\iota f, \iota g],
\end{equation}
which is continuous on the Hilbert space $\mathcal W$ and hence can be represented with the help of a bounded operator $T$ in $\mathcal W$ in the form
\begin{equation}\label{tdef}
 (f, Tg)_{\mathcal W}=\mathfrak t[\iota f, \iota g],\quad f,g\in\mathcal W.
\end{equation}
The nonnegativity of the form $\mathfrak t$ implies that $T$ is a self-adjoint and nonnegative operator in $\mathcal W$. Furthermore, one obtains from \eqref{tform} that
\begin{equation}
 (f, Tg)_{\mathcal W}=\mathfrak t[\iota f, \iota g]=\bigl( \iota f,A_{min} \iota g\bigr)_{L^2(\Omega)}=\bigl(f,\iota^* A_{min} \iota g\bigr)_{\mathcal W},\quad f,g\in\mathcal W,
\end{equation}
and hence,
\begin{equation}
 T=\iota^* A_{min} \iota.
\end{equation}
In particular, since $A_{min} \iota:\mathcal W\rightarrow L^2(\Omega)$ is defined on the whole space $\mathcal W$ and is closed as an operator from $\cW$ 
to $L^2(\Omega)$, it follows that $A_{min} \iota$ is bounded and
hence the compactness of $\iota^*:L^2(\Omega)\rightarrow\mathcal W$ implies that $T=\iota^* A_{min} \iota$ is a compact operator in the Hilbert space $\mathcal W$.

The next useful lemma shows that the eigenvalues of $T$ are precisely the reciprocals of 
the nonzero eigenvalues of $A_K$. Lemma~\ref{mainlem} is inspired by the connection of 
the Krein--von Neumann extension to the buckling of a clamped plate problem 
(cf.\ \cite[Theorem 6.2]{AGMST10} and \cite{AGMT10,AGMST13,Gr83}).

\begin{lemma}\label{mainlem}
Assume Hypothesis~\ref{h2.1} and let $T$ be the nonnegative compact operator in $\mathcal W$ defined by \eqref{tdef}. Then 
\begin{equation}
 \lambda\in\sigma_p(A_{K})\backslash\{0\}\,\text{ if and only if }\, 
\lambda^{-1}\in\sigma_p(T), 
\end{equation}
counting multiplicities.
\end{lemma}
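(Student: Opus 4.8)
The plan is to recast the eigenvalue equation $T\psi=\lambda^{-1}\psi$ as a distributional buckling-type equation for $w:=A_{min}\psi$, and then to exhibit an explicit linear isomorphism between the eigenspaces $\ker(T-\lambda^{-1})$ and $\ker(A_K-\lambda)$, which simultaneously settles the equivalence of eigenvalues and the equality of their multiplicities. First I would unwind the definition of $T$: by \eqref{tdef} and \eqref{cite1}, for $\psi\in\mathcal W=\mathring H^2(\Omega)$ and $\lambda\neq 0$ the relation $T\psi=\lambda^{-1}\psi$ is equivalent to $(f,A_{min}\psi)_{L^2(\Omega)}=\lambda^{-1}(A_{min}f,A_{min}\psi)_{L^2(\Omega)}$ for all $f\in\mathring H^2(\Omega)$. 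Both sides are continuous in $f$ with respect to the $\mathcal W$-norm, so by density it suffices to test against $f\in C_0^\infty(\Omega)$. Setting $w:=A_{min}\psi\in L^2(\Omega)$ and using that $-\Delta+V$ is formally self-adjoint, I would rewrite $(A_{min}f,w)_{L^2(\Omega)}$ as the distributional pairing of $(-\Delta+V)w$ against $f$. The identity above then becomes $\langle w-\lambda^{-1}(-\Delta+V)w,\,f\rangle=0$ for every test function $f$, i.e.\ $(-\Delta+V)w=\lambda w$ in the sense of distributions. Since $w\in L^2(\Omega)$ and $\lambda w\in L^2(\Omega)$, this forces $w\in\dom(A_{max})$ together with $A_{max}w=\lambda w$; conversely, reversing these steps shows that any $\psi$ with $A_{min}\psi\in\dom(A_{max})$ and $A_{max}(A_{min}\psi)=\lambda A_{min}\psi$ satisfies $T\psi=\lambda^{-1}\psi$.

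Next I would set up the correspondence between eigenvectors. For the implication from $T$ to $A_K$, given $\psi\neq 0$ with $T\psi=\lambda^{-1}\psi$ I put $v:=\lambda^{-1}A_{min}\psi=\lambda^{-1}w$ and $v_0:=v-\psi$. A short computation, using $A_{max}\psi=A_{min}\psi=w$ together with $A_{max}w=\lambda w$ from the first step, gives $A_{max}v_0=\lambda^{-1}A_{max}w-A_{min}\psi=w-w=0$, so that $v=\psi+v_0\in\mathring H^2(\Omega)\dotplus\ker(A_{max})=\dom(A_K)$ and $A_Kv=A_{min}\psi=\lambda v$. Strict positivity of $A_{min}$ (Lemma~\ref{lemmaa}\,$(iii)$) yields $w\neq 0$, hence $v\neq 0$, so $\lambda\in\sigma_p(A_K)\backslash\{0\}$. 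For the reverse implication, given $v\in\dom(A_K)$ with $A_Kv=\lambda v$ I decompose $v=v_{min}+v_0$ along \eqref{akjussi}; since $A_{max}v_0=0$ one has $A_Kv=A_{min}v_{min}$, and the eigen-relation reads $A_{min}v_{min}=\lambda v$. Hence $\psi:=v_{min}$ satisfies $A_{min}\psi=\lambda v\in\dom(A_{max})$ and $A_{max}(A_{min}\psi)=\lambda A_{max}v=\lambda^2 v=\lambda A_{min}\psi$, so $T\psi=\lambda^{-1}\psi$ by the converse part of the first step; and $\psi\neq 0$ because $\psi=0$ would force $v=\lambda^{-1}A_{min}v_{min}=0$.

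To obtain the equality of multiplicities I would check that the two assignments $\psi\mapsto\lambda^{-1}A_{min}\psi$ and $v\mapsto v_{min}$ are mutually inverse linear maps between $\ker(T-\lambda^{-1})$ and $\ker(A_K-\lambda)$. Indeed, in the forward-then-back composition one has $v_{min}=\psi$ by construction, while in the back-then-forward composition the eigen-relation $A_{min}v_{min}=\lambda v$ gives $\lambda^{-1}A_{min}(v_{min})=v$; here the decomposition $v=v_{min}+v_0$ is unique because the sum $\mathring H^2(\Omega)\dotplus\ker(A_{max})$ is direct, which in turn follows from the injectivity of $A_{min}$. Consequently $\dim\ker(T-\lambda^{-1})=\dim\ker(A_K-\lambda)$, completing the proof.

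I expect the first step to be the main obstacle. The function $w=A_{min}\psi$ carries no a priori boundary regularity---indeed, elements of $\ker(A_K)$ need not lie in any $H^s(\Omega)$, $s>0$, by Theorem~\ref{t6.3}\,$(ii)$---so the passage from the variational identity, valid only when tested against $\mathring H^2(\Omega)$, to the strong conclusion $w\in\dom(A_{max})$ with $A_{max}w=\lambda w$ must be carried out entirely through the distributional pairing and the formal self-adjointness of $-\Delta+V$, rather than by any integration by parts that would invoke traces of $w$. Keeping the argument trace-free, and correctly tracking the (conjugate-)linearity of the pairings, is the delicate part.
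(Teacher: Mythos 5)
Your proof is correct, and its skeleton is the same as the paper's: the same eigenvector correspondence ($\psi\mapsto\lambda^{-1}A_{min}\psi$ in one direction, $v\mapsto v_{min}$ in the other), pivoting on the same buckling-type identity $A_{max}A_{min}\psi=\lambda A_{min}\psi$. The one genuine difference is the tool used to upgrade the weak (form) identity $\mathfrak a[f,\psi]=\lambda\,\mathfrak t[f,\psi]$, $f\in\mathring H^2(\Omega)$, to that strong identity: the paper invokes Kato's first representation theorem to identify $A_{max}A_{min}$ as the operator representing the closed form $\mathfrak a$, whereas you test against $C_0^\infty(\Omega)$ (dense in $\mathcal W$ by Lemma~\ref{lemmaa}\,$(ii)$) and then use nothing beyond the distributional definition \eqref{We-Q.2} of $\dom(A_{max})$ --- equivalently, the adjoint relation $A_{max}=A_{min}^*$ of Lemma~\ref{lemmaa}\,$(i)$. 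Your route is more elementary and self-contained, and the difficulty you flag resolves exactly as you predict: since $w=A_{min}\psi$ only ever appears paired distributionally against test functions, no boundary regularity of $w$ is needed anywhere. What the paper's route buys is brevity, as the density/continuity bookkeeping you carry out by hand is absorbed into the citation of Kato. Finally, you are more explicit than the paper on the multiplicity claim: the paper constructs the two maps but leaves implicit that they are mutually inverse linear bijections between the eigenspaces (including the directness of the sum $\dom(A_{min})\dotplus\ker(A_{max})$, which you correctly reduce to the injectivity of $A_{min}$); spelling this out is a small but genuine improvement.
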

\begin{proof}
Assume first that $\lambda\not=0$ is an eigenvalue of $A_K$ and let $g$ be a corresponding eigenfunction. We decompose $g$ in the form
\begin{equation}
 g=g_{min}+g_0,\quad g_{min}\in\dom (A_{min}),\,\,\, g_0\in\ker (A_{max}) 
\end{equation}
(cf.\ \eqref{akjussi}), where $g_{min}\not= 0$ as $\lambda\not=0$. Then one concludes  
\begin{equation}
 A_{min} g_{min}=A_K (g_{min} + g_0) =A_K g, 
\end{equation}
and hence, 
\begin{equation}
A_{min} g_{min} -\lambda g_{min}= A_K g-\lambda g_{min}=\lambda g-\lambda g_{min}=\lambda g_0\in\ker (A_{max}),
\end{equation}
so that
\begin{equation}
 A_{max} A_{min} g_{min} = \lambda A_{max} g_{min}= \lambda A_{min} g_{min}.
\end{equation}
This yields 
\begin{equation}\label{easycomp}
 \begin{split}
  (f,\lambda^{-1}  g_{min})_\cW&=\mathfrak a [f,\lambda^{-1}g_{min}]\\
  &=\bigl(A_{min} f,\lambda^{-1} A_{min}g_{min}\bigr)_{L^2(\Omega)} \\
  &=\bigl( f,\lambda^{-1} A_{max} A_{min}g_{min}\bigr)_{L^2(\Omega)}\\
  &= \bigl( f, A_{min} g_{min}\bigr)_{L^2(\Omega)} \\
  &= \mathfrak t[f, g_{min}]\\
  &=(f,   T g_{min})_{\cW}, \quad f\in\mathcal W, 
 \end{split}
\end{equation}
where, for simplicity, we have identified elements in $\mathcal W$ with those in 
$\dom (\mathfrak a)$, and hence omitted the embedding map $\iota$. 
From \eqref{easycomp} we then conclude
\begin{equation}\label{tkg}
 T g_{min}=\frac{1}{\lambda} g_{min},
\end{equation}
which shows that $\lambda^{-1}\in\sigma_p(T)$. 

Conversely, assume that $h\in\cW \backslash \{0\}$ and $\lambda\not=0$ are such that
\begin{equation}\label{tkg1}
 T h=\frac{1}{\lambda} h
\end{equation}
holds. Then it follows from \eqref{cite1} and \eqref{tdef} that
\begin{equation}
 \mathfrak a [f,h]=\mathfrak a [f,\lambda T h]= (f,\lambda Th)_\cW=\mathfrak t [f,\lambda h] =\big(f,\lambda A_{min} h\big)_{L^2(\Omega)}, \quad f\in\dom (\mathfrak a). 
\end{equation}
As a consequence of the first representation theorem for quadratic forms 
\cite[Theorem~VI.2.1\,$(iii)$, Example~VI.2.13]{Ka80}, one concludes that $A_{max} A_{min}$ 
is the representing operator for $\mathfrak a$, and therefore, 
\begin{equation}
 h\in \dom(A_{max} A_{min}) \, \text{ and } \, A_{max} A_{min}h=\lambda A_{min}h.
\end{equation}
In particular,  $h\in \dom(A_{min})$ and
\begin{equation}\label{kernaa}
\begin{split}
 A_{max} (A_{min}-\lambda)h&= A_{max} A_{min}h- \lambda A_{max}h\\
 &= A_{max} A_{min}h- \lambda A_{min}h\\
 &=0.
\end{split}
 \end{equation}
Defining 
\begin{equation}
 g:=\frac{1}{\lambda} A_{min} h= h+ \frac{1}{\lambda}\bigl(A_{min}-\lambda\bigr)h, 
\end{equation}
$h\in \dom(A_{min})$ and $(A_{min}-\lambda)h\in \ker (A_{max})$ by \eqref{kernaa}, 
together with \eqref{akjussi} imply $g\in\dom A_K$. Moreover, 
$g\not=0$ since $A_{min}$ is positive. Furthermore, 
\begin{equation}
 A_K g=A_{max}g=\frac{1}{\lambda} A_{max}A_{min} h = A_{min}h=\lambda g, 
\end{equation}
shows that $\lambda\in\sigma_p(A_K)$.
\end{proof}

\begin{proof}[Proof of Theorem~\ref{mainthm}]
Let $T$ be the nonnegative compact operator in $\mathcal W$ defined by \eqref{tdef}.
We order the eigenvalues of $T$ in the form 
\begin{equation}\lb{kko-7}
0\leq\cdots\leq\mu_{j+1}(T)\leq\mu_j(T)\leq\cdots\leq\mu_1(T),
\end{equation} 
listed according to their multiplicity, and set 
\begin{equation}\lb{kko-8}
\cN(\lambda,T):=\#\,\bigl\{j\in\bbN : \mu_j(T)\geq\lambda^{-1}\bigr\},  
\quad   \lambda>0.
\end{equation} 
It follows from Lemma~\ref{mainlem} that
\begin{equation}
 \cN(\lambda,T)=N(\lambda,A_K),\quad \lambda>0,
\end{equation} 
and hence \cite{Ko83}
yields the asymptotic formula, 
\begin{equation}
N(\lambda,A_K)=\cN(\lambda,T)\underset{\lambda\to\infty}{=} 
\omega\,\lambda^{n/2}+O\big(\lambda^{(n-(1/2))/2}\big),
\end{equation}
with 
\begin{align}\label{trray45y}
\omega &:=\frac{1}{n(2\pi)^n}\int_\Omega\Bigg(\int_{S^{n-1}} 
\Bigg[\frac{\sum_{j=1}^n\xi_j^2}{\sum_{j,k=1}^n\xi^2_j\xi^2_k}\Bigg]^{\frac{n}{2}}
\,d\omega_{n-1}(\xi)\Bigg)d^n x
\nonumber\\[4pt]
& \,\,=(2\pi)^{-n}\,v_n\,|\Omega|.
\end{align}
\end{proof}

For bounds on $N(\, \cdot \,,A_K)$ in the case of $\Omega \subset \bbR^n$ open and 
of finite ($n$-dimensional) Lebesgue measure, we refer to \cite{GLMS14}.


 
\end{document}